\documentclass[11pt]{amsart}
\usepackage{amsmath,amsfonts,amssymb,amsthm}
\oddsidemargin -0.25cm \evensidemargin -.5cm \topmargin 0cm
\textheight 23cm \textwidth 16.5cm

\def\w{\omega}
\def\z{\mathfrak{z}}

\def\g{\mathfrak{g}}
\def\go{\mathfrak{g}_0}
\def\gcc{\mathfrak{g}_0'}
\def\gccc{(\mathfrak{g}_0')^\perp}
\def\h{\mathfrak{h}}

\def\o{\theta}
\def\u{\mu}
\def\l{\lambda}
\def\ll{\lambda'}
\def\L{\Lambda}

\def\b{\beta}
\def\gd{\mathfrak{g}^*}

\def\gl{\g_\l}
\def\gm{\g_\mu}
\def\gll{\g_{\l'}}

\def\r{\mathbb{R}}

\def\aff{\mathfrak{aff}}

\def\^{\wedge}

\def\c{\mathbb{C}}

\newcommand{\nc}{\newcommand}
\nc{\ad}{\operatorname{ad}}
\nc{\tr}{\operatorname{tr}}
\nc{\I}{\operatorname{Id}}
\nc{\alt}{\raise1pt\hbox{$\bigwedge$}}
\nc{\pint}{\langle \cdotp,\cdotp \rangle }
\nc{\la}{\langle}
\nc{\ra}{\rangle}
\nc{\n}{\noindent}

\theoremstyle{plain}
\newtheorem{teo}{\bf Theorem}[section]
\newtheorem{cor}[teo]{\bf Corollary}
\newtheorem{prop}[teo]{\bf Proposition}
\newtheorem{lema}[teo]{\bf Lemma}

\theoremstyle{definition}

\newtheorem{ejemplo}[teo]{\bf Example}

\theoremstyle{remark}
\newtheorem{nota}[teo]{\bf Remark}
\newtheorem*{rem}{\bf Remark}
\newtheorem*{rems}{\bf Remarks}

\newcommand{\ri}{{\rm (i)}}
\newcommand{\rii}{{\rm (ii)}}
\newcommand{\riii}{{\rm (iii)}}
\newcommand{\riv}{{\rm (iv)}}

\title[LCK structures on Lie groups]{Locally conformally K\"ahler structures
on unimodular Lie groups}

\author{A. Andrada}
\email{andrada@famaf.unc.edu.ar}
\author{M. Origlia}
\email{origlia@famaf.unc.edu.ar}

\date{}
\address{FaMAF-CIEM, Universidad Nacional de C\'{o}rdoba, Ciudad Universitaria,
5000 C\'{o}rdoba, Argentina}

\subjclass[2010]{53C15, 53B35, 53C30}
\keywords{Hermitian metric, locally conformally K\"ahler metric, abelian complex structure}
\thanks{The authors were partially supported by CONICET, ANPCyT and SECyT-UNC (Argentina).}

\begin{document}

\begin{abstract}
We study left-invariant locally conformally K\"ahler structures on Lie groups, or equivalently, on
Lie algebras. We give some properties of these structures in general, and then we consider the
special cases when its complex structure is bi-invariant or abelian. In the former case, we show
that no such Lie algebra is unimodular, while in the latter, we prove that if the Lie algebra is
unimodular, then it is isomorphic to the product of $\r$ and a Heisenberg Lie algebra.
\end{abstract}

\maketitle

\section{Introduction}

\
Let $(M,J,g)$ be a $2n$-dimensional Hermitian manifold and let $\omega$ be its
fundamental $2$-form, that is,
$\omega(X,Y)=g(JX,Y)$ for any $X,Y$ vector fields on $M$. The manifold $(M,J,g)$
is called {\it locally conformally K\"ahler} (or
l.c.K., for short) if $g$ can be rescaled locally, in a neighborhood of any
point in $M$, so as to be K\"ahler, or equivalently,
if there exists a closed $1$-form $\theta$ such that
\[d\omega=\theta\wedge\omega.\]
This $1$-form $\theta$ is called the {\it Lee form}. This notion was introduced
by P. Libermann
\cite{L1} in $1954$, but the geometry of these manifolds was not developed until
the 70's, with the work of  I. Vaisman.
These manifolds are a natural generalization of the class of K\"ahler manifolds,
and they have been much studied by many authors
(see for instance \cite{DO, S1, V2}). According to \cite{GH}, a locally
conformally K\"ahler manifold is in the class $\mathcal
W_4$ of the Gray-Hervella classification of almost Hermitian manifolds. An important class of
l.c.K. metrics is given by those whose Lee form is parallel with respect to the Levi-Civita
connection. These l.c.K. structures are called \textit{Vaisman}, and their existence imposes
topological and cohomological restrictions on the underlying Hermitian manifold (see for instance
\cite{V2}). 

We will consider locally conformally K\"ahler structures on solvmanifolds, that
is, compact quotients
$\Gamma\backslash G$ where $G$ is a simply connected solvable Lie group and
$\Gamma$ is a lattice in $G$, which are induced by
left-invariant locally conformally K\"ahler structures on the Lie group. These
structures have been the subject of study in
several recent papers. For instance, it was shown in \cite{S} that if a
non-toral nilmanifold  admits an invariant locally
conformally K\"ahler structure, then it is a quotient of $\r \times H_{2n+1}$,
where $H_{2n+1}$ is the $(2n +1)$-dimensional
Heisenberg Lie group. In \cite{S1} it was proved that any invariant locally
conformally K\"ahler structure on a solvmanifold such that $\w=-\o\wedge J\o +d(J\o)$ is in fact
Vaisman. According to \cite{OV1} this condition is related to the
existence of a potential for the l.c.K. metric. In \cite{K} it is proved the
non-existence of Vaisman metrics on solvmanifolds satisfying certain
cohomological conditions. 

In this article we study l.c.K. structures on Lie algebras with two special
kinds of complex structures. First, we take into account {\it bi-invariant}
complex structures on Lie algebras, i.e., an endomorphism $J$ of a Lie algebra
$\g$ that satisfies
\[ J^2=-I, \hspace{2cm} J[X,Y]=[X,JY] \, \, \text{for all} \, \,  X,Y\in \g.\]
This condition holds if and only if both left- and right-translations on the
corresponding simply connected (real) Lie group are holomorhic, or equivalently,
this Lie group is in fact a complex Lie group.

The other special kind of complex structures that we consider is given by the so
called {\it abelian} complex structures. Recall that an
abelian complex structure on a Lie algebra $\g$ is an endomorphism $J$ of $\g$
that satisfies
\[ J^2=-I, \hspace{2cm} [JX,JY]=[X,Y] \, \, \text{for all} \, \,  X,Y\in \g,\]
or equivalently,  the $i$-eigenspace of $J$ in $\g^\mathbb C$ is an abelian
subalgebra of $\g^\mathbb C$.
There are well known obstructions for the existence of abelian
complex structures. For instance, if the Lie algebra $\g$ admits
such a structure, then $\g$ has abelian commutator (i.e., $\g$ is
two-step solvable), and the center of $\g$ is $J$-invariant, among
other properties (see Lemma \ref{prop}). These structures are very
important in several areas of geometry and they have been studied by
many authors recently (see for instance \cite{BDV, DF, mpps}).
 
The outline of this article is as follows. In Section $2$ we review some known results about
l.c.K.~manifolds and left-invariant complex structures on Lie groups. In Section $3$ we determine
some properties of Lie algebras endowed with an l.c.K.~or Vaisman structure. Next, in Section $4$ we
prove that there exists no unimodular Lie algebra $\g$ equipped with an l.c.K.
structure $(J,\pint)$ where $J$ is bi-invariant (Theorem \ref{J-bi}). Finally, in Section $5$ we
prove that if $(\g,J,\pint)$ is l.c.K. with an abelian complex structure $J$ and $\g$ is unimodular
then $\g \simeq \r\times\h_{2n+1}$, where $\mathfrak{h}_{2n+1}$ is the $(2n+1)$-dimensional
Heisenberg Lie algebra. Moreover, there is only one, up to equivalence, monoparametric family
$(J_0,\pint_\l),\, \l>0$, of l.c.K. structures on this Lie algebra, where the metrics $\pint_\l$
are pairwise non-isometric.

\

\section{Preliminaries}

\

\subsection{Locally conformally K\"ahler manifolds}

\

A Hermitian metric on an almost complex manifold $(M,J)$ is a
Riemannian metric $g$ such that $g(X,Y)=g(JX,JY)$ for any vector
fields $X,Y$ on $M$. In this case $(M,J,g)$ is called an almost
Hermitian manifold. When the almost complex structure $J$ is
integrable (i.e., $(M,J)$ is a complex manifold), then $(M,J,g)$ is
called a Hermitian manifold.

Given an almost Hermitian manifold $(M,J,g)$, the {\em fundamental 2-form} is defined  by
$\omega(X,Y)=g(JX,Y)$ for any vector fields $X,Y$ on $M$.

A {\em K\"ahler metric} on a complex manifold $(M,J)$ is a Hermitian metric $g$ whose fundamental
$2$-form $\omega$ is closed, that is, $d\omega=0$. Then $M$ is called a K\"ahler manifold.

\medskip

K\"ahler manifolds are by far the most important Hermitian
manifolds. Nevertheless, this condition might be very restrictive in
some cases, and therefore, weaker conditions are studied. One way to
do so is to consider Hermitian manifolds whose metric is locally
conformal to a K\"ahler metric.

The Hermitian manifold $(M,J,g)$ is  {\em locally conformally K\"ahler}
(l.c.K.) if there exists an open covering
$\{ U_i\}_{i\in I}$ of $M$ and a family $\{ f_i\}_{i\in I}$ of
$C^{\infty}$-functions, $f_i:U_i \to \r$, such that each local
metric
\begin{equation}\label{gi}
g_i=\exp(-f_i)\,g|_{U_i}
\end{equation}
is K\"ahler. Also $(M,J,g)$ is {\em globally conformally K\"ahler} (g.c.K.)
if there exists a $C^{\infty}$-function, $f:
M\to\r$, such that the metric $\exp(f)g$ is K\"ahler.


\medskip

We recall an important result which characterizes l.c.K. manifolds in terms
of its fundamental form (see \cite{DO} for a proof).

\begin{teo}[\cite{L}]\label{dw=tita*w}
The Hermitian manifold $(M,J,g)$ is l.c.K. if and only if there exists a
closed $1$-form $\theta$ globally defined on $M$ such
that
\begin{equation}\label{lck}
d\omega=\theta\wedge\omega.
\end{equation}
Moreover, $(M,J,g)$ is globally conformally K\"ahler if and only the $1$-form $\theta$ in
\eqref{lck} is exact.
\end{teo}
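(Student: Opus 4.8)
The plan is to pass back and forth between the global Lee form $\theta$ and the local conformal factors $f_i$ of \eqref{gi}, the bridge in both directions being the elementary fact that wedging with the fundamental form $\omega$ is injective on $1$-forms. I would first record that fact: since $\omega$ is nondegenerate (being the fundamental form of a Hermitian metric), the map $\alpha\mapsto\alpha\wedge\omega$ from $1$-forms to $3$-forms is injective at every point of $M$ whenever $\dim M\ge 4$ --- for instance because the Lefschetz power $L^{n-1}$ carries $\Lambda^1$ isomorphically onto $\Lambda^{2n-1}$, so $L$ is already injective on $\Lambda^1$. When $\dim M=2$ the form $\omega$ is a volume form, $d\omega=0$ automatically, and the statement is trivial with $\theta=0$; so I assume $\dim M\ge 4$ from now on.

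For the ``only if'' direction, let $\{U_i\}$ and $\{f_i\}$ be as in the definition of l.c.K. A conformal change of metric preserves the Hermitian condition with respect to $J$, and the fundamental form of $g_i=\exp(-f_i)g|_{U_i}$ is $\omega_i=\exp(-f_i)\,\omega|_{U_i}$. Since $g_i$ is K\"ahler, $0=d\omega_i=\exp(-f_i)\bigl(d\omega-df_i\wedge\omega\bigr)$ on $U_i$, hence $d\omega=df_i\wedge\omega$ there. On an overlap $U_i\cap U_j$ this gives $(df_i-df_j)\wedge\omega=0$, so $df_i=df_j$ by the injectivity above, and therefore the local exact forms $df_i$ patch together to a globally defined $1$-form $\theta$ with $\theta|_{U_i}=df_i$. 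This $\theta$ is closed, being locally exact, and satisfies $d\omega=\theta\wedge\omega$; it is moreover the unique such form, since any other solution $\theta'$ would satisfy $(\theta-\theta')\wedge\omega=0$.

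For the ``if'' direction, assume $\theta$ is closed with $d\omega=\theta\wedge\omega$. By the Poincar\'e lemma pick an open cover $\{U_i\}$ and functions $f_i$ on $U_i$ with $df_i=\theta|_{U_i}$. Then $d\bigl(\exp(-f_i)\omega\bigr)=\exp(-f_i)\bigl(d\omega-df_i\wedge\omega\bigr)=\exp(-f_i)\bigl(\theta\wedge\omega-\theta\wedge\omega\bigr)=0$, so $g_i=\exp(-f_i)g|_{U_i}$ is Hermitian with respect to $J$ and has closed fundamental form, i.e.\ is K\"ahler; thus $(M,J,g)$ is l.c.K. The g.c.K.\ claim is the same computation with a single global primitive: if $\exp(-f)g$ is K\"ahler for a global $f$ then $d\omega=df\wedge\omega$ and uniqueness forces $\theta=df$, exact; conversely if $\theta=df$ globally, the displayed identity with one $f$ shows $\exp(-f)g$ is K\"ahler, so $g$ is globally conformal to a K\"ahler metric.

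I expect the only genuinely non-routine point to be the injectivity of $\wedge\,\omega$ on $1$-forms, which is exactly what makes the local data $df_i$ agree on overlaps and thereby produces a well-defined and unique global $\theta$ (and which fails in real dimension $2$, handled separately above); everything else is bookkeeping with the conformal rescalings.
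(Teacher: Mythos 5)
Your proof is correct: the paper does not prove this theorem itself (it defers to \cite{DO}), and your argument --- rescaling the fundamental form, using the pointwise injectivity of $\alpha\mapsto\alpha\wedge\omega$ on $1$-forms in dimension $\ge 4$ to glue the local $df_i$ into a unique closed global $\theta$, and reversing the computation via the Poincar\'e lemma --- is exactly the standard proof found there. The separate treatment of real dimension $2$ is a sensible precaution, since that is precisely where the injectivity of $L$ on $1$-forms fails.
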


\medskip

\begin{rems}
(i) A simply connected l.c.K. manifold is g.c.K., in particular the
universal cover of a l.c.K. manifold is g.c.K.

\smallbreak\n(ii) An l.c.K. manifold $(M,J,g)$ is K\"ahler if
and only if $\theta=0$. Indeed, $\theta\wedge\omega=0$ and $\omega$
non-degenerate imply $\theta=0$.

\smallbreak\n(iii) It is known that if $(M,J,g)$ is a Hermitian manifold with
$\dim M\ge 6$ such that \eqref{lck} holds for some
$1$-form $\theta$,
then $\theta$ is automatically closed, therefore $M$ is l.c.K.
\end{rems}

\medskip

The $1$-form $\theta$ of the previous theorem is called the {\em Lee form} and
it was introduced by H. C. Lee in \cite{L}.
The Lee form is uniquely determined by the following formula:
\begin{equation} \label{tita}
\theta=-\frac{1}{n-1}(\delta\omega)\circ J,
\end{equation}
where $\omega$ is the fundamental $2$-form, $\delta$ is the codifferential and $2n$ is the dimension of $M$.
In general this formula is used to define the Lee
form of any almost Hermitian manifold.
 
\medskip

\begin{ejemplo}
The {\em Hopf manifolds} are examples of locally conformally
K\"ahler manifolds which are not g.c.K. Let $\lambda\in\c$,
$|\lambda|\neq 1$ and $\triangle_\lambda$ be the cyclic group
generated by transformations $z\mapsto\lambda z$ of $\c^n-\{0\}$.
The quotient space $CH_\lambda^n=(\c^n-\{0\})/\triangle_\lambda$ is
a complex manifold and it is called {\em Hopf's complex manifold}.
It can be seen that $CH_\lambda^n$ is diffeomorphic to $S^1\times
S^{2n-1}$. Particularly $CH_\lambda^n$ is compact and its first
Betti number is $b_1(CH_\lambda^n)=1$. Since all odd Betti numbers
of a compact K\"ahler manifold are even, it follows that
$CH_\lambda^n$ cannot admit a K\"ahler metric.

We consider now the Hermitian metric on $\c^n-\{0\}$ \[h=\sum
\frac{dz_j\otimes d\overline z_j}{|z|^2},\] and canonical complex
structure $J$. This metric is invariant by $\triangle_\lambda$, then
it induces a Hermitian metric on $CH_\lambda^n$ which is called the
{\em Boothby's metric}. This Hermitian structure on $CH_\lambda^n$
is in fact l.c.K. and, moreover, $\o$ is parallel with respect to
Levi Civita connection.
The l.c.K. manifolds with this property are a special case of l.c.K. manifolds.
\end{ejemplo}

Let $(M,J,g)$ be an l.c.K. manifold. The metric $g$ on $M$ is called
{\em Vaisman } if the Lee form $\theta$ is parallel with respect to
the Levi-Civita connection of $(M,g)$. A Vaisman manifold is an
l.c.K.~manifold with a Vaisman metric. It is known that Vaisman
manifolds have some special properties which do not necessarily hold
in l.c.K. manifolds. For example, the first Betti number of a
Vaisman manifold is odd (\cite{V3,KS}), while the Oeljeklaus-Toma
manifolds are examples of l.c.K. manifolds with even first Betti
number (\cite{OT}).

\

\subsection{Complex structures on Lie algebras}

\

A left-invariant almost complex structure $J$ on a Lie group $G$ is
a $(1,1)$-differential tensor such that $J_g: T_gG \to T_gG$ is an
endomorphism, $J_g^2=-\I$ for all $g\in G$ and left-traslations are
holomorphic. As usual, the almost complex structure $J$ on $G$ is
called integrable if
\[ [JX,JY]-[X,Y]-J([JX,Y]+[X,JY])=0,\]
for any $X,Y$ vector fields on $G$. In this case, $J$ is called a
left-invariant complex structure and $(G,J)$ is a complex manifold.

A left-invariant (almost) complex structure is determined by its value on the
identity of $G$, and therefore it is possible to
define an (almost) complex structure on the Lie algebra $\g$ of $G$. Namely, a
{\em complex structure J} on a Lie algebra $\g$ is
an endomorphism $J: \g \to \g$ satisfying $J^2=-\I$ and
\[ [JX,JY]-[X,Y]-J([JX,Y]+[X,JY])=0,\] for any $X,Y \in \g$.

\medskip

In this article we will be interested in two special kinds of almost complex
structures on Lie algebras, namely bi-invariant
complex structure and abelian complex structures. An almost complex structure
$J$ on $\g$ is called \textit {bi-invariant} if \[
J[X,Y]=[X,JY], \, \, \text{for all} \, \,  X,Y\in \g,\] and
it is called \textit {abelian} if \[ [JX,JY]=[X,Y], \, \, \text{for all} \, \, 
X,Y\in \g.\]

\medskip
\begin{nota}
Note that in both cases the almost complex structure is automatically
integrable. Also, a complex structure on $\g$ cannot be
bi-invariant and abelian at the same time, unless $\g$ is an abelian Lie
algebra.
 \end{nota}

\medskip

\begin{nota}\label{grupo_complejo}
In general, right-traslations are not holomorphic on a Lie group $G$ with a
left-invariant complex structure $J$. This holds only
when $G$ is a complex Lie group with the holomorphic structure given by $J$, or
equivalently, $J$ is bi-invariant.
\end{nota}

\

Next, we include some properties about abelian complex structures in the
following lemma (see \cite{ABD, BD, P} for their proof).

\begin{lema}\label{prop}
Let $\g$ be a Lie algebra with an abelian complex structure $J$ and $\z(\g)$ its
center. Then
\begin{enumerate}
\item $J\z(\g)=\z(\g)$.
\item $\g' \cap J\g'\subset \z(\g'+J\g')$.
\item The codimension of $[\g,\g]$ is at least $2$, unless $\g$ is isomorphic to
$\aff(\r)$ (the only $2$-di\-men\-sional non-abelian Lie algebra).
\item $\g'$ is abelian, therefore $\g$ is $2$-step solvable.
\end{enumerate}
\end{lema}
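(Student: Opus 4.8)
The plan is to establish parts (1) and (4) first, since (2) and (3) will follow from them, and to work from two elementary reformulations of the abelian condition. From $[JX,JY]=[X,Y]$, replacing $X$ by $JX$ and using $J^2=-\I$ gives the skew relation $[JX,Y]=-[X,JY]$, equivalently $\ad_{JX}=-\ad_X\circ J$ for all $X\in\g$. Dually, complexifying $\g^\c=\g^{1,0}\oplus\g^{0,1}$ into the $\pm i$-eigenspaces of $J$, the hypothesis says precisely that each eigenspace is an abelian subalgebra (as recalled in the introduction). I would keep the real formulation for parts (1), (2), (3) and the complex one for (4). Part (1) is then immediate: if $Z\in\z(\g)$ then $[JZ,Y]=-[Z,JY]=0$ for all $Y$, so $JZ\in\z(\g)$; hence $J\z(\g)\subseteq\z(\g)$, and applying $J$ again with $J^2=-\I$ forces equality.

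For (4) I pass to $\g^\c$. Since $\g^{1,0}$ and $\g^{0,1}$ are abelian, only the mixed brackets survive, so $(\g')^\c=[\g^\c,\g^\c]=[\g^{1,0},\g^{0,1}]=:\mathfrak m$, and the claim reduces to $[\mathfrak m,\mathfrak m]=0$; that is, a Lie algebra which is a vector-space sum of two abelian subalgebras is metabelian. The mechanism I would use is to compute, via the Jacobi identity together with the vanishing of $[\g^{1,0},\g^{1,0}]$ and $[\g^{0,1},\g^{0,1}]$, the restrictions
\[ \ad_{[a,b]}|_{\g^{1,0}}=\ad_a\ad_b|_{\g^{1,0}},\qquad \ad_{[a,b]}|_{\g^{0,1}}=-\ad_b\ad_a|_{\g^{0,1}} \]
for $a\in\g^{1,0}$, $b\in\g^{0,1}$, and then substitute these into $[[a,b],[c,d]]$ with $a,c\in\g^{1,0}$, $b,d\in\g^{0,1}$. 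I expect this to be the main obstacle: a naive Jacobi expansion is circular, merely reproducing the Jacobi identity, so one must carefully track the iterated brackets inside $\mathfrak m$ and use the two abelian relations simultaneously in order to cancel the surviving mixed terms. Descending back to the real form then yields $[\g',\g']=0$, i.e. $\g$ is $2$-step solvable.

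With (4) in hand, (2) is short. Let $W\in\g'\cap J\g'$; then $W\in\g'$, and since $W\in J\g'$ we also have $JW\in\g'$ (because $J(J\g')=-\g'=\g'$). For $U\in\g'$ we get $[W,U]\in[\g',\g']=0$, while the skew relation gives $[W,JU']=-[JW,U']\in[\g',\g']=0$ for $U'\in\g'$. As $\g'+J\g'$ is spanned by vectors of the form $U$ and $JU'$ with $U,U'\in\g'$, this shows $W\in\z(\g'+J\g')$.

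For (3), suppose $\g'$ has codimension $1$ and write $\g=\r X_0\oplus\g'$. By (4), $\g'$ is an abelian ideal, and $\g'=[\g,\g]=\ad_{X_0}(\g')$ forces $\ad_{X_0}|_{\g'}$ to be invertible. Writing $JX_0=cX_0+W$ with $c\in\r$, $W\in\g'$, I would note that $\ad_W$ annihilates $\g'$ (since $\g'$ is abelian), so $\ad_{JX_0}|_{\g'}=c\,\ad_{X_0}|_{\g'}$; comparing this with $\ad_{JX_0}=-\ad_{X_0}J$ and using the invertibility of $\ad_{X_0}|_{\g'}$ pins down $Jw=\phi(w)X_0-cw$ for a suitable linear functional $\phi$ on $\g'$. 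Imposing $J^2=-\I$ then yields $(1+c^2)\,w=-\phi(w)\,W$ for every $w\in\g'$; since $1+c^2\neq0$, every $w\in\g'$ is a multiple of the fixed vector $W$, whence $\dim\g'=1$ and $\g\simeq\aff(\r)$.
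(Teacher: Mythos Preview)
The paper does not actually prove this lemma: it states the four items and refers to \cite{ABD, BD, P} for proofs. So there is no in-paper argument to compare against, and I will assess your proposal on its own merits.

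Your arguments for (1), (2) and (3) are complete and correct. The skew relation $\ad_{JX}=-\ad_X\circ J$ is the right starting point, the deduction of (2) from (4) via $[W,JU']=-[JW,U']\in[\g',\g']=0$ is clean, and the linear-algebra computation in (3) goes through exactly as you describe (the degenerate case $\dim\g'=0$ is excluded since $J^2=-\I$ forces $\dim\g$ even).

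Part (4) is where the content lies, and here your proposal is honest but genuinely incomplete. You correctly reduce the claim to the assertion that a Lie algebra which is the vector-space sum of two abelian subalgebras is metabelian---this is precisely Petravchuk's theorem, the reference \cite{P} that the paper cites. Your displayed identities
\[
\ad_{[a,b]}|_{\g^{1,0}}=\ad_a\ad_b|_{\g^{1,0}},\qquad \ad_{[a,b]}|_{\g^{0,1}}=-\ad_b\ad_a|_{\g^{0,1}}
\]
are correct, but, as you yourself anticipate, substituting them into $[[a,b],[c,d]]$ does not produce an immediate cancellation: after splitting $[c,d]$ into its $\g^{1,0}$- and $\g^{0,1}$-components one is left with $\ad_a\ad_b\bigl([c,d]_{1,0}\bigr)-\ad_b\ad_a\bigl([c,d]_{0,1}\bigr)$, and these two terms do not obviously combine to zero. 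Petravchuk's actual argument is more indirect than a single Jacobi expansion. If you want a self-contained proof of (4), you must either reproduce that argument or exploit the extra structure present here (the sum $\g^{1,0}\oplus\g^{0,1}$ is direct and the summands are interchanged by conjugation), neither of which your sketch supplies. As it stands, (4) is a correct plan pointing at a known theorem rather than a proof.
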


\

A rich family of Lie algebras with (abelian) complex structures is
obtained by considering a finite dimensional real associative
algebra $\mathcal A$ and $\aff(\mathcal{A})$ the vector space
$\mathcal{A}\oplus\mathcal{A}$ equipped with the Lie bracket given
by
\[[(a,b),(a',b')]=(aa'-a'a,ab'-a'b), \hspace{10pt} a,b,a',b' \in\mathcal{A}.\]
If $J$ is the endomorphism of $\aff(\mathcal{A})$ defined by \[J(a,b)=(b,-a),
\hspace{10pt} a,b \in \mathcal{A},\]
then it is easy to see that $J$ is a complex structure on $\aff(\mathcal{A})$.
This complex structure is called {\em standard}.
Moreover, when $\mathcal{A}$ is commutative, $J$ is abelian. We prove next a
result about $\aff(\mathcal{A})$ that will be used
later.

\begin{lema}\label{affA}
If $\mathcal{A}$ is an associative commutative algebra and $\aff(\mathcal{A})$
is unimodular, then $\mathcal{A}$ is
nilpotent. Therefore $\aff(\mathcal{A})$ is a nilpotent Lie algebra.
\end{lema}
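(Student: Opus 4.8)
The plan is to reduce the statement to a standard fact about finite-dimensional associative algebras by computing the trace of the adjoint representation of $\aff(\mathcal{A})$ explicitly. Since $\mathcal{A}$ is commutative, the bracket becomes $[(a,b),(a',b')]=(0,ab'-a'b)$, so the commutator ideal lies in $0\oplus\mathcal{A}$. Writing elements of $\aff(\mathcal{A})=\mathcal{A}\oplus\mathcal{A}$ in block form with respect to this splitting, $\ad_{(a,b)}$ maps the first copy of $\mathcal{A}$ into the second one and acts on the second copy as left multiplication $L_a\colon x\mapsto ax$ (using commutativity to turn $a'b$ into $ba'$). Hence $\tr\ad_{(a,b)}=\tr L_a$, and therefore $\aff(\mathcal{A})$ is unimodular if and only if $\tr L_a=0$ for every $a\in\mathcal{A}$.

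It then remains to prove the purely algebraic claim: a finite-dimensional commutative associative real algebra $\mathcal{A}$ with $\tr L_a=0$ for all $a$ is nilpotent. I would argue by contradiction. If $\mathcal{A}$ is not nilpotent, its nilradical $\mathcal{N}$ (the set of nilpotent elements, which is an ideal since $\mathcal{A}$ is commutative, and is nilpotent since $\mathcal{A}$ is Artinian) is a proper ideal, so $\mathcal{A}/\mathcal{N}$ is a nonzero reduced finite-dimensional commutative algebra, hence a finite product of fields; in particular it has a unit, which is a nonzero idempotent. Lifting idempotents modulo the nil ideal $\mathcal{N}$ gives $e\in\mathcal{A}$ with $e^2=e$ and $e\neq 0$. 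Then $L_e^2=L_{e^2}=L_e$, so $L_e$ is a projection and $\tr L_e=\dim(\operatorname{Im} L_e)=\dim(e\mathcal{A})\geq 1$, because $e=e^2\in e\mathcal{A}$. This contradicts $\tr L_e=0$. Hence $\mathcal{A}$ is nil, and being finite-dimensional it coincides with its (nilpotent) nilradical, so $\mathcal{A}$ is a nilpotent algebra.

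To conclude that $\aff(\mathcal{A})$ is a nilpotent Lie algebra, I would track the lower central series: from the bracket formula one checks by induction that its $j$-th term equals $0\oplus\mathcal{A}^j$ for $j\geq 2$, where $\mathcal{A}^j$ is the span of all products of $j$ elements of $\mathcal{A}$; since $\mathcal{A}$ is nilpotent, $\mathcal{A}^k=0$ for some $k$, and the series terminates.

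I expect the genuine content to be the associative-algebra fact; the trace computation and the lower-central-series bookkeeping are routine. In the algebraic step the only mildly delicate inputs are that the nilradical of a finite-dimensional algebra is nilpotent and that idempotents lift modulo nil ideals, both standard. An alternative that sidesteps idempotent lifting is to decompose the Artinian ring $\mathcal{A}$ as a product of local rings and pick the unit of any factor that is not nil as the desired nonzero idempotent.
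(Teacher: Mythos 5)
Your proof is correct and takes essentially the same route as the paper: both reduce unimodularity to the vanishing of $\tr(L_a)$ and derive a contradiction from a nonzero idempotent $e$ in a non-nilpotent $\mathcal{A}$, whose left multiplication is a nonzero projection and hence has positive trace. You additionally justify two steps the paper leaves implicit, namely the existence of the idempotent (via the nilradical and idempotent lifting) and the lower-central-series argument showing $\aff(\mathcal{A})$ is nilpotent.
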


\begin{proof}
Suppose that $\mathcal{A}$ is not nilpotent, then there exists
$0\neq e\in \mathcal{A}$ such that $e^2=e$. We consider
$(e,0)\in\aff(\mathcal{A})$ and we compute
$\ad_{(e,0)}(x,y)=(0,ey)=(0,l_e(y))$ where $l_e$ is the
left-multiplication by $e$. Therefore the matrix of $\ad_{(e,0)}$ is
of the form
\begin{equation}
\ad_{(e,0)}=\begin{pmatrix} 0 & 0 \cr 0 & l_e\end{pmatrix}
\end{equation}
Since $l_e^2=l_e$ and $l_e\neq 0$, there exists a basis of
$\mathcal{A}$ such that
\begin{equation}
l_e=\begin{pmatrix} I & 0 \cr 0 & 0\end{pmatrix}
\end{equation}
and therefore $\tr(\ad_{(e,0)})\neq0$.
\end{proof}

\smallskip

\begin{nota}
With a similar proof, one can show that if $\mathcal{A}$ is an associative
algebra with
identity, then $\aff(\mathcal{A})$ is not unimodular.
\end{nota}

\

\section{Left invariant l.c.K. metrics on Lie groups}

\

Let $G$ be a Lie group with a left-invariant complex structure $J$ and a
left-invariant metric $g$. If $(G,J,g)$ satisfies the
l.c.K. condition \eqref{lck}, then $(J,g)$ is called a {\em left-invariant l.c.K. structure}
on the Lie group $G$. That is, there exists a
closed $1$-form $\theta$ on $G$ such that $d\omega=\theta\wedge\omega$. We will
see next that the Lee form $\theta$ is left-invariant. Therefore the Lee form $\theta$ is determined by its value in the
identity.

\begin{prop}
Let $G$ be a Lie group with a left-invariant l.c.K. structure $(J,g)$, with
$\theta$ the associated Lee form. Then $\theta$ is
left-invariant.
\end{prop}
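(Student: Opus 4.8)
The plan is to exploit the fact that every left-invariant form on $G$ is, up to scaling, determined by its value at the identity, and then show that the Lee form—though a priori only globally defined and closed—must actually coincide with a left-invariant form. The key observation is that $\omega$ and $d\omega$ are both left-invariant (since $J$ and $g$ are), so $\theta \wedge \omega$ is left-invariant; the task is to deduce that $\theta$ itself is left-invariant from this together with $d\theta = 0$.

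First I would average. For each $a \in G$, let $L_a^*\theta$ denote the pullback of $\theta$ by left-translation; since $L_a^*\omega = \omega$ and $d$ commutes with pullback, applying $L_a^*$ to $d\omega = \theta \wedge \omega$ gives $d\omega = (L_a^*\theta)\wedge\omega$. Subtracting, $(L_a^*\theta - \theta)\wedge\omega = 0$, and since $\omega$ is nondegenerate and $\dim G \ge 4$ (so that wedging a $1$-form with $\omega$ is injective on $1$-forms—the same argument as in Remark (ii) after Theorem \ref{dw=tita*w}), we conclude $L_a^*\theta = \theta$ for every $a \in G$. Hence $\theta$ is left-invariant.

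In low dimension ($\dim G = 2$) one must check separately, but there the l.c.K. condition is vacuous/degenerate, or one simply notes $\omega$ is a top-degree form and $\theta\wedge\omega = 0$ forces $\theta = 0$, which is trivially left-invariant; in any case the interesting case is $\dim G \ge 4$. The main (and essentially only) point to be careful about is the injectivity of the map $\eta \mapsto \eta \wedge \omega$ on $1$-forms when $\omega$ is a nondegenerate $2$-form on a space of dimension $2n \ge 4$: this is the Lefschetz-type fact that $L\colon \Lambda^1 \to \Lambda^3$ is injective, which follows because $\omega^{n-1}\wedge\eta = 0$ would force $\eta = 0$ by nondegeneracy. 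Everything else is a formal manipulation with pullbacks, so I do not expect any genuine obstacle.
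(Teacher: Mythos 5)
Your proof is correct, but it takes a genuinely different route from the paper. The paper's argument is a two-line appeal to the formula $\theta=-\frac{1}{n-1}(\delta\omega)\circ J$ of \eqref{tita}: since $\omega$ is left-invariant and $d$, $\ast$, and hence $\delta$ preserve left-invariance, and $J$ is left-invariant, $\theta$ is left-invariant. You instead work directly from the defining identity $d\omega=\theta\wedge\omega$: pulling back by $L_a$ fixes $\omega$ and $d\omega$, so $(L_a^*\theta-\theta)\wedge\omega=0$, and the pointwise injectivity of $\eta\mapsto\eta\wedge\omega$ on $1$-forms (valid for a nondegenerate $2$-form in dimension $2n\ge 4$, since $\eta\wedge\omega=0$ forces $\eta\wedge\omega^{n-1}=0$ and hence $\eta=0$) gives $L_a^*\theta=\theta$. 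Your approach is more elementary and self-contained: it avoids the codifferential and the Hodge star altogether, and uses only the same linear-algebra fact the paper already invokes in Remark (ii) to show $\theta\wedge\omega=0$ implies $\theta=0$. What the paper's approach buys is brevity once \eqref{tita} is available, plus the observation that this formula simultaneously establishes uniqueness of the Lee form; your argument in effect re-proves that uniqueness (injectivity of $\wedge\,\omega$) as an intermediate step. Your aside about $\dim G=2$ is harmless but unnecessary in context, since the l.c.K.\ condition is only of interest in dimension at least $4$.
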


\begin{proof}
Recall that if $\alpha$ is any left-invariant form on $G$, then
$d\alpha, \, \ast\alpha$ and $\delta\alpha=\pm\ast\circ \;
d\circ\ast\alpha$ are also left-invariant. Since $J$ is
left-invariant, the claim follows from \eqref{tita}.
\end{proof}

This fact allows us to define l.c.K. structures on Lie algebras.

\medskip

Let $\g$ a Lie algebra, $J$ a complex structure and $\pint$ a Hermitian inner
product on $\g$, with $\w$ its fundamental $2$-form.
The triple $(\g,J,\pint)$ is called {\em locally conformally K\"ahler} (l.c.K.)
if there exists $\o \in \gd$, with $d\o=0$, such
that\footnote{Recall that if $\theta\in\g^*$ and $\omega\in\alt^2\g^*$, then
their exterior derivatives $d\theta\in\alt^2\g^*$
and $d\omega\in\alt^3\g^*$ are given by
\[ d\theta(X,Y)= -\theta([X,Y]), \qquad
d\omega(X,Y,Z)=-\omega([X,Y],Z)-\omega([Y,Z],X)-\omega([Z,X],Y),\]
for any $X,Y,Z\in\g$.}
\begin{equation} \label{g-lck-0}
d\w=\o\wedge\w.
\end{equation}

\smallskip

A Lie algebra $\g$ with a Hermitian structure $(J,\pint)$ is \textit{Vaisman} if
$(\g,J,\pint)$ is l.c.K. and the Lee form is
parallel (see Proposition \ref{ad_A-antisim} below). 

\smallskip

\begin{ejemplo}\label{heisenberg}
Let $\g=\r\times\mathfrak{h}_{2n+1}$, where $\mathfrak{h}_{2n+1}$ is the $(2n+1)$-dimensional
Heisenberg Lie algebra. There is a basis $\{X_1,\dots,X_n,Y_1,\dots,Y_n,Z_1,Z_2\}$ of $\g$ with Lie
brackets given by $[X_i,Y_i]=Z_1$ for $i=1,\dots,n$ and $Z_2$ in the center. We define a metric
$\pint$ on $\g$ such that the basis above is orthonormal. Let $J_0$ be an almost complex
structure given by:
\[J_0X_i=Y_i, \quad  J_0Z_1=-Z_2  \; \; \; \text{for $i=1,\dots,n$}.\]
It is easily seen that $J_0$ is a complex structure on $\g$. Let
$\{x^i,y^i,z^1,z^2\}$ be the $1$-forms dual to $\{X_i,Y_i,Z_1,Z_2\}$
respectively. Then the fundamental form is: 
\[\w=\sum_{i=1}^n(x^i\wedge y^i) - z^1\wedge z^2.\]
Thus $d\w$ is:\[d\w=z^2\wedge\w,\] 
and therefore $(\g,J_0,\pint)$ is l.c.K. It can be seen that the Lee form
$\o=z^2$ is parallel, hence the metric is Vaisman. This example appeared in
\cite{CFL}.

It is known that $\g$ is the Lie algebra of the Lie group $\r\times H_{2n+1}$,
where $H_{2n+1}$ is the group of all matrices with real coefficients which have
the following form:

\[P=\begin{pmatrix} 1& A & c \cr 0& I_n & B^t \cr 0&0&1 \end{pmatrix} \; \; \;
c\in \r, \; I_n=id_{n\times n} .\]
where $A=(a_1,\dots,a_n)\in\r^n$, $B=(b_1,\dots,b_n)\in\r^n$ and $c\in\r$. Let
$\Gamma\subset H_{2n+1}$ be the subgroup of all
matrices with integer coefficients. Then $\Gamma\backslash H_{2n+1}$ is compact
and the nilmanifold $N= S^1 \times
\Gamma\backslash H_{2n+1}$ admits a l.c.K. structure which is Vaisman.
\end{ejemplo}

\begin{rem}
The complex structure $J_0$ defined in the previous example is abelian. Moreover, it was proved in
\cite{BD} that if $J$ is a complex structure on a Lie algebra $\g$ with $\dim\g'=1$, then $J$ is
abelian.
\end{rem}

\medskip

\begin{ejemplo}
In \cite{ACFM} the following example of a l.c.K. solvmanifold was given.
Let $\g$ be the $4$-dimensional solvable Lie algebra given by
\[\g=\text{span}\{A,X,Y,Z\}\]
\[[A,X]=X,\quad [A,Y]=-Y,\quad [X,Y]=Z.\]
Let $\{\alpha,x,y,z\}$ be the dual basis of $\{A,X,Y,Z\}$. We can check by
direct computation that
\[d\alpha=0,\quad dx=-\alpha\wedge x,\quad dy=\alpha\wedge y,\quad dz=-x\wedge
y.\]
Let $\pint$ be a inner product on $\g$ such that $\{A,X,Y,Z\}$ is an orthonormal
basis. If we define $J$ by \[JA=Y, \quad JZ=X,\]
then $(\g,J,\pint)$ is Hermitian with the fundamental $2$-form $\omega$ given by
\[\omega=\alpha\wedge y + z\wedge x.\]
Therefore we obtain
\[d\omega=-\alpha\wedge\omega.\]
Hence it is l.c.K. with Lee form $\theta=-\alpha$. This metric is not Vaisman,
since $\alpha$ is not parallel (see also Lemma
\ref{ad_A-antisim} below). It was proved in \cite{ACFM} that the associated
simply connected solvable Lie group $G$ admits a
lattice $\Gamma$ and therefore the solvmanifold $\Gamma\backslash G$ admits a
l.c.K. structure.
It is proved in \cite{Kam} that this l.c.K. solvmanifold is holomorphically
homothetic to the Inoue surface
$Sol^4_1/\Gamma$ equipped with the locally conformal K\"ahler structure
constructed by Tricerri in \cite{T}.
\end{ejemplo}

\

Now we study some properties about Lie algebras equipped with a l.c.K or Vaisman
structure.

\medskip

Let $(\g,J,\pint)$ be l.c.K. and suppose that $\g$ is not K\"ahler, that is,
$d\omega=\theta\wedge\omega$ where $\theta$ is closed
and $\o\neq 0$. Then the codimension of $\ker\theta$ is $1$ and then we can
choose
\begin{equation}\label{A}
A\in(\ker\theta)^\perp \;\; \text{such that} \;\; \theta(A)=1,
\end{equation}
and therefore $\g$ can be decomposed orthogonally as
\begin{equation} \label{g-lck}
\g=\text{span}\{A\}\oplus \ker\theta, \quad \text{with} \quad
\g'\subset\ker\theta.
\end{equation}
Note that since $\theta\neq 0$, $\g$ cannot be a semisimple Lie algebra.
Since $J$ is skew-symmetric we obtain $\langle JA,A\rangle=0$ and therefore
$JA\in \ker\theta$. If $W$ is the orthogonal complement of $\text{span}\{JA\}$ in $\ker\theta$, we
have
\begin{equation} \label{g-lck-1}
\g=\text{span}\{A,JA\}\oplus^\perp W,
\end{equation}
and $W$ is invariant by $J$.

\medskip

%
%

Note that the Lee form can be written as
\begin{equation}\label{tita-pi}
\theta(X)=\frac{\la X,A\ra}{|A|^2} \; \; \mbox{for all} \; X\in\g.
\end{equation}

\medskip

\begin{lema}\label{Jad_JA-simetrico}
If $(\g,J,\pint)$ is l.c.K. then $J\circ\ad_{JA}$ is symmetric.
\end{lema}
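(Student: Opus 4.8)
The plan is first to translate ``$J\circ\ad_{JA}$ is symmetric'' into an identity about $\w$, and then to obtain that identity from the l.c.K.\ condition by a short interior-product computation. Since $J$ is skew-symmetric with respect to $\pint$, for all $X,Y\in\g$ one has $\la J\ad_{JA}X,Y\ra=\la J[JA,X],Y\ra=\w([JA,X],Y)$ and $\la X,J\ad_{JA}Y\ra=-\la JX,[JA,Y]\ra=-\w(X,[JA,Y])$. Hence $J\circ\ad_{JA}$ is symmetric if and only if
\[\w([JA,X],Y)+\w(X,[JA,Y])=0\qquad\text{for all }X,Y\in\g,\]
that is, if and only if the $2$-form $\rho$ defined by $\rho(X,Y)=\w([JA,X],Y)+\w(X,[JA,Y])$ vanishes. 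In the left-invariant exterior calculus of $\g$ one recognizes $-\rho$ as $\mathcal L_{JA}\w$, so by Cartan's formula $\rho=-\,d(\iota_{JA}\w)-\iota_{JA}(d\w)$, and it suffices to show that both terms vanish.

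Next I would identify the $1$-form $\iota_{JA}\w$. For $X\in\g$,
\[(\iota_{JA}\w)(X)=\w(JA,X)=\la J(JA),X\ra=-\la A,X\ra,\]
so by \eqref{tita-pi} we get $\iota_{JA}\w=-|A|^2\,\o$. Since $\o$ is closed and $|A|^2$ is a constant, $d(\iota_{JA}\w)=-|A|^2\,d\o=0$. For the other term, using $d\w=\o\wedge\w$ and the derivation property of $\iota_{JA}$,
\[\iota_{JA}(d\w)=\iota_{JA}(\o\wedge\w)=\o(JA)\,\w-\o\wedge(\iota_{JA}\w)=\o(JA)\,\w+|A|^2\,\o\wedge\o.\]
Here $\o(JA)=\la JA,A\ra/|A|^2=0$ because $J$ is skew-symmetric, and $\o\wedge\o=0$; thus $\iota_{JA}(d\w)=0$. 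Therefore $\rho=0$, which is precisely the asserted symmetry.

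An equivalent, more pedestrian route is to evaluate the l.c.K.\ identity $d\w=\o\wedge\w$ (with $d\w$ given by the bracket formula recalled in the footnote) on the triple $(X,Y,JA)$: using that $\pint$ is Hermitian one has $\w(W,JA)=\la JW,JA\ra=\la W,A\ra=|A|^2\,\o(W)$, while $\o([X,Y])=-d\o(X,Y)=0$ and $\o(JA)=0$, so the right-hand side collapses to $0$ and the left-hand side reduces exactly to $-\bigl(\w([JA,X],Y)+\w(X,[JA,Y])\bigr)$. I do not expect a genuine obstacle here: the only points requiring care are the Hermitian identities $\la JA,A\ra=0$ and $\w(JA,X)=-\la A,X\ra$, the closedness of the Lee form, and the normalization \eqref{tita-pi}; the conceptual content is simply the observation that $\iota_{JA}\w$ is a scalar multiple of $\o$.
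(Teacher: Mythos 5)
Your proposal is correct, and in fact it contains the paper's proof as its ``pedestrian route'': evaluating $d\w=\o\wedge\w$ on the triple $(JA,X,Y)$ (the paper's ordering; yours is the cyclic permutation $(X,Y,JA)$, which gives the same value), using $\la JA,A\ra=0$, $A\perp\g'$ and the Hermitian identities to see that the right-hand side vanishes and the left-hand side reduces to the symmetry of $J\circ\ad_{JA}$. Your primary route via $\mathcal{L}_{JA}\w=d(\iota_{JA}\w)+\iota_{JA}(d\w)$ is the same computation repackaged: $\iota_{JA}(d\w)$ evaluated at $(X,Y)$ is exactly $d\w(JA,X,Y)$, and the term $d(\iota_{JA}\w)$ absorbs the summand $\w([X,Y],JA)$ that the paper kills by hand using $A\perp\g'$. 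What the repackaging buys is the clean conceptual observation that $\iota_{JA}\w=-|A|^2\o$ is a constant multiple of the closed Lee form, so both terms of Cartan's formula vanish for structural reasons ($d\o=0$ and $\o\wedge\o=0$) rather than by termwise cancellation; all the identities you invoke ($\w(JA,X)=-\la A,X\ra$, $\o(JA)=0$, the antiderivation property of $\iota_{JA}$, and the validity of Cartan's formula on the Chevalley--Eilenberg complex with the sign conventions of the footnote) check out. There is no gap.
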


\begin{proof}
For any $X,Y\in \g$ we compute
\begin{align*}
d\omega(JA,X,Y) &=-\omega([JA,X],Y)-\omega([X,Y],JA)-\omega([Y,JA],X)\\
                &=-\langle J[JA,X],Y\rangle-\langle J[X,Y],JA\rangle-\langle
J[Y,JA],X\rangle\\
                &=-\langle J[JA,X],Y\rangle-\langle J[Y,JA],X\rangle.
\end{align*}
On the other hand, using \eqref{tita-pi}, we obtain
\begin{align*}
\theta\wedge\omega(JA,X,Y) & = \theta(X)\omega(Y,JA)+\theta(Y)\omega(JA,X) \\
                &=\frac{\langle A,X\rangle}{|A|^2}\langle
Y,A\rangle-\frac{\langle A,Y\rangle}{|A|^2}\langle A,X\rangle\\
&=0.
\end{align*}
It follows from \eqref{g-lck-0} that $\langle J[JA,X],Y\rangle=\langle
J[JA,Y],X\rangle$ for all $X,Y \in \g$, hence $J\circ
\ad_{JA}$ is symmetric.
\end{proof}

\

Now we consider a Vaisman structure $(J,\pint)$ on $ \g$, with $\o$ its parallel
Lee form. In this context the Vaisman
condition can be characterized as follows.

\begin{lema}\label{ad_A-antisim}
Let $(\g,J,\pint)$ be l.c.K. and let $A\in\g$ be as in \eqref{A}. Then $(\g,
J,\pint)$ is Vaisman if and only if $\ad_A$ is skew-symmetric.
\end{lema}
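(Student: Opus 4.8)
The plan is to rewrite the Vaisman condition, i.e.\ parallelism of the Lee form $\theta$, as a condition on $\ad_A$ by means of the Koszul formula for left-invariant metrics.

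First I would reduce parallelism of $\theta$ to parallelism of $A$. By \eqref{tita-pi} we have $\theta=|A|^{-2}\langle\,\cdot\,,A\rangle$ with $|A|^2$ constant, so for left-invariant vector fields $X,Y$ one computes $(\nabla_X\theta)(Y)=|A|^{-2}\langle Y,\nabla_X A\rangle$; hence $\theta$ is parallel if and only if $\nabla_X A=0$ for every left-invariant $X$ (which is enough, since $\nabla$ is $C^\infty$-linear in its first argument and an arbitrary vector field is a $C^\infty$-combination of left-invariant ones). So it remains to show that $\nabla_X A=0$ for all $X\in\g$ is equivalent to $\ad_A$ being skew-symmetric.

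Next I would invoke the Koszul formula: since the metric coefficients of left-invariant fields are constant,
\[ 2\langle\nabla_X A,Z\rangle=\langle[X,A],Z\rangle-\langle[A,Z],X\rangle-\langle[X,Z],A\rangle\qquad\text{for all } X,Z\in\g. \]
The crucial observation is that the last term vanishes: since $d\theta=0$ we have $\theta([X,Z])=-d\theta(X,Z)=0$, and by \eqref{tita-pi} this says precisely $\langle[X,Z],A\rangle=0$ (this is the inclusion $\g'\subset\ker\theta$ from \eqref{g-lck}, and it is exactly the point where the l.c.K.\ hypothesis, not merely Hermitian, is used). Writing $[X,A]=-\ad_A X$ and $[A,Z]=\ad_A Z$ then yields
\[ 2\langle\nabla_X A,Z\rangle=-\langle\ad_A X,Z\rangle-\langle\ad_A Z,X\rangle. \]

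Finally, the right-hand side vanishes for all $X,Z\in\g$ exactly when $\langle\ad_A X,Z\rangle=-\langle X,\ad_A Z\rangle$ for all $X,Z$, that is, when $\ad_A$ is skew-symmetric with respect to $\pint$; combined with the first step this gives the asserted equivalence. I do not expect a genuine obstacle here: the only delicate points are keeping track of the signs in the Koszul formula and noticing that closedness of $\theta$ together with $A\perp\ker\theta$ is what kills the cyclic term $\langle[X,Z],A\rangle$, so that only the two $\ad_A$-terms survive.
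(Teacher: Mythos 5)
Your proposal is correct and is essentially the paper's own argument: both hinge on the Koszul formula for left-invariant fields together with the observation that $\g'\subset\ker\theta$ and $A\perp\ker\theta$ kill the term $\langle[X,Z],A\rangle$, leaving exactly the symmetrization of $\ad_A$. The only cosmetic difference is that you compute $\nabla_X A$ and dualize via \eqref{tita-pi}, whereas the paper computes $(\nabla_X\theta)(Y)=-\theta(\nabla_XY)$ directly; the two are the same calculation.
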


\begin{proof}
Recall first that the Levi-Civita connection $\nabla$ of a left-invariant
Riemannian metric on a Lie group is itself
left-invariant, that is, $\nabla_XY$ is a left-invariant vector field whenever
$X,Y$ are left-invariant. Similarly,
$\nabla T$ is a left-invariant tensor if $T$ is a left-invariant tensor. In
particular, if $\eta$ is a left-invariant $1$-form,
we have that $(\nabla_X\eta)(Y)=-\eta(\nabla_XY)$ for $X,Y$ left-invariant
vector fields.

Let us now compute $\nabla \theta$, where $\nabla$ is the Levi-Civita connection
on $\g$ associated to $\pint$ and $\theta$ is
the Lee form. Given $X,Y\in \g$ we have that
\[ (\nabla_X \theta)(Y)=
-\o(\nabla_XY)=-\frac{\langle\nabla_XY,A\rangle}{|A|^2}=-\frac{1}{2|A|^2}\{
\langle[X,Y],A\rangle-\langle[Y,A],X\rangle+\langle[A,X
] ,Y\rangle\}\]
Since $A$ is orthogonal to $\g'$ we obtain:
\[(\nabla_X
\theta)(Y)=-\frac{1}{2|A|^2}\{\langle[A,Y],X\rangle+\langle[A,X],Y\rangle\}\]
Therefore $(\nabla_X \theta)(Y)=0$ if and only if
$\langle[A,Y],X\rangle=-\langle[A,X],Y\rangle$ for all $X,Y\in \g$.
\end{proof}

\medskip

\begin{nota}
Let $H$ be a Lie group equipped with a left-invariant metric $h$. Recall that
the endomorphism $\ad_A$ of its Lie algebra
$\mathfrak h$ is skew-symmetric with respect to $h_e$ if and only if the
left-invariant vector field on $H$ determined by $A$ is
Killing.
\end{nota}

\

\section{L.c.K. structures with bi-invariant complex structure}

\

In this section we consider a Lie algebra equipped with an l.c.K.
structure such that its complex structure is bi-invariant.
Equivalently, we are considering left-invariant l.c.K. metrics on
complex Lie groups. The aim of this section is to prove the
following result, where we show that in each even (real) dimension,
there is only one Lie algebra admitting such metrics.

\

\begin{teo}\label{J-bi}
Let $(J,\pint)$ be an l.c.K. structure on $\g$ with a bi-invariant complex
structure $J$. Then $\g \simeq
\r^2\ltimes\r^{2n}$, a $J$-invariant and orthogonal sum, where $\r^2$ is
generated by $A,B$ with $[A,B]=0$, $JA=B$,
$[A,X]=-\frac12 X$ and $[B,X]=-\frac12 JX$ for all $X\in\r^{2n}$.
\end{teo}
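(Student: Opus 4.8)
The plan is to exploit the bi-invariance of $J$ together with the l.c.K. identity to pin down the bracket completely. Recall the orthogonal decomposition $\g=\text{span}\{A,JA\}\oplus^\perp W$ from \eqref{g-lck-1}, where $A$ is chosen as in \eqref{A}, $W$ is $J$-invariant, and $\g'\subset\ker\theta=\text{span}\{JA\}\oplus W$. Set $B=JA$. The first step is to apply Lemma \ref{Jad_JA-simetrico}: $J\circ\ad_B$ is symmetric. On the other hand, $J$ bi-invariant means $\ad_X$ commutes with $J$ for every $X$, so $J\circ\ad_B=\ad_B\circ J$, and since $\ad_B$ is the bracket by an element of a Lie algebra with an ad-invariant-looking structure... more directly: for any $X$, $\langle\ad_B X, X\rangle$ relates to the symmetry. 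I would combine ``$J\ad_B$ symmetric'' with ``$\ad_B$ commutes with $J$'' to show $\ad_B$ itself is symmetric (indeed $J\ad_B$ symmetric and $J$ skew give $(J\ad_B)^* = -\ad_B^* J = J\ad_B$, so $\ad_B^* = -J\ad_B J^{-1}\cdot J J = \dots$, i.e. $\ad_B$ is symmetric). Similarly I would derive information about $\ad_A$: since $[A,B]\in\g'$ and $\langle[A,B],A\rangle=\langle[A,B],B\rangle$ will be forced to vanish by skew-symmetry of $J$ and the structure of $\theta$, one expects $[A,B]=0$.

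The second, central step is to compute $d\omega(A,X,Y)$ and $d\omega(B,X,Y)$ for $X,Y\in W$ and feed them into \eqref{g-lck-0}, exactly as in the proof of Lemma \ref{Jad_JA-simetrico} but now also extracting the ``$A$-direction''. Using $\theta(X)=\langle X,A\rangle/|A|^2$, the right-hand side $\theta\wedge\omega$ only sees the $A$-component of its arguments, so for $X,Y\in\ker\theta$ one gets $(\theta\wedge\omega)(A,X,Y)=\omega(X,Y)$ up to normalization, while $(\theta\wedge\omega)(B,X,Y)=0$. Expanding the left-hand side with bi-invariance ($\omega([X,Y],Z)=\langle J[X,Y],Z\rangle=\langle[X,JY],Z\rangle$, etc.) should yield, after rearranging, that $\ad_A$ acts on $W$ as $-\tfrac12\I$ and that $\ad_B=\ad_A\circ J=-\tfrac12 J$ on $W$, together with $[X,Y]\in\text{span}\{B\}$ with $[X,Y]=-\langle JX,Y\rangle\,\text{(something)}\,B$ — and then the bi-invariance forcing $[X,JY]=J[X,Y]$ combined with $[X,Y]$ being a multiple of $B$ and $J$ preserving $W$ forces $[X,Y]=0$ on $W$. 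So $W$ is abelian, $[A,X]=-\tfrac12 X$, $[A,B]=0$, and $[B,X]=-\tfrac12 JX$, which is precisely the asserted bracket with $\r^{2n}=W$. One must also check the Jacobi identity is automatically satisfied by this bracket (it is: the two derivations $-\tfrac12\I$ and $-\tfrac12 J$ of the abelian ideal $W$ commute and $[A,B]=0$), and verify that this $\g$ is genuinely non-unimodular, matching the abstract claim --- $\tr(\ad_A)=-n\neq0$.

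The main obstacle I anticipate is the bookkeeping in the second step: one has to simultaneously exploit \emph{three} conditions --- the l.c.K. equation \eqref{g-lck-0}, integrability (which for bi-invariant $J$ is automatic but still constrains via the interplay of $[X,JY]$ and $J[X,Y]$), and skew-symmetry of $J$ --- and show they leave no freedom beyond a single overall scale that is then fixed by $\theta(A)=1$. A subtle point is ruling out a nonzero bracket $[W,W]$ landing in $\text{span}\{B\}$: naively $\ad_B$ symmetric and $\ad_A=-\tfrac12\I$ permit $[X,Y]=\phi(X,Y)B$ for a $2$-form $\phi$, but Jacobi applied to $(A,X,Y)$ forces $\ad_A$ to preserve this, giving $-\phi(X,Y)=\phi(-\tfrac12 X,Y)+\phi(X,-\tfrac12 Y)=-\phi(X,Y)$, which is consistent, so one genuinely needs the bi-invariance relation $J[X,Y]=[X,JY]$, i.e. $\phi(X,Y)JB=\phi(X,JY)B$ with $JB=-A\notin\text{span}\{B\}$, to conclude $\phi\equiv0$. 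Once that is in hand, the theorem follows by assembling the pieces.
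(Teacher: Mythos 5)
Your overall strategy coincides with the paper's: the same decomposition $\g=\text{span}\{A,JA\}\oplus^\perp W$, the same appeal to Lemma \ref{Jad_JA-simetrico}, and the same evaluation of $d\omega=\theta\wedge\omega$ on triples containing $A$ are exactly the ingredients used there, and the conclusions $\ad_A|_W=-\tfrac12\I$, $[A,B]=0$, $\ad_B|_W=-\tfrac12 J$ do come out of that computation. However, there are two problems. The first is a sign slip: combining ``$J\circ\ad_B$ symmetric'' with ``$\ad_B$ commutes with $J$'' yields $\ad_B^*=J\ad_B J=\ad_B J^2=-\ad_B$, i.e.\ $\ad_B$ is \emph{skew}-symmetric, not symmetric (which is consistent with your own final answer $\ad_B|_W=-\tfrac12 J$). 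What is actually needed, and what the paper extracts from Lemma \ref{Jad_JA-simetrico} together with $J\circ\ad_{JA}=-\ad_A$, is that $\ad_A$ is symmetric; that symmetry is what makes the two $\ad_A$-terms in $d\omega(A,X,JY)$ combine into $-2\langle[A,X],Y\rangle$.

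The second problem is a genuine gap: your mechanism for proving $[W,W]=0$ does not work. The identities $d\omega(A,X,Y)=\theta\wedge\omega(A,X,Y)$ and $d\omega(B,X,Y)=\theta\wedge\omega(B,X,Y)$ for $X,Y\in W$ only control the components of $[X,Y]$ along $A$ and along $B$: the $A$-component vanishes because $\g'\subset\ker\theta$, and the $B$-component vanishes because $\g'$ is $J$-invariant and orthogonal to $A$ (equivalently, once $\ad_A|_W=-\tfrac12\I$ is known, the $(A,X,Y)$-identity reduces to $\langle[X,Y],JA\rangle=0$). These equations say nothing about the component of $[X,Y]$ lying inside $W$, which is precisely the component one has to kill; so the premise ``$[X,Y]\in\text{span}\{B\}$'' that you feed into the bi-invariance argument is not what the computation delivers, and the subtlety you single out (a bracket landing in $\text{span}\{B\}$) is not the real one. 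The paper closes this gap by noting that $\g'$ is a $J$-invariant subalgebra contained in $\ker\theta$, so $(\g',J|_{\g'},\pint)$ is K\"ahler, and then invoking Lemma \ref{bi-K} (K\"ahler with bi-invariant $J$ implies abelian). Alternatively, once you have $[A,X]=-\tfrac12 X$ for all $X\in W$ and $[W,W]\subset W$, the Jacobi identity gives $[A,[X,Y]]=-[X,Y]$ while membership in $W$ forces $[A,[X,Y]]=-\tfrac12[X,Y]$, whence $[X,Y]=0$. Some argument of this kind is indispensable and is missing from your proposal.
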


\medskip

In order to prove this theorem, we recall the following well known result
concerning the existence of K\"ahler metrics on complex Lie groups.
\begin{lema}\label{bi-K}
If $(\g,J,\pint)$ is K\"ahler with $J$ bi-invariant, then $\g$ is abelian.
\end{lema}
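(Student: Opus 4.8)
The plan is to push everything down to the Lie algebra and to read the K\"ahler condition $d\omega=0$ as an identity for the trilinear form $c(X,Y,Z):=\langle[X,Y],Z\rangle$, which is automatically skew-symmetric in its first two arguments. First I would record the consequences of bi-invariance to be used below: from $J[X,Y]=[X,JY]$ and the skew-symmetry of the bracket one gets $J[X,Y]=[JX,Y]=[X,JY]$, hence $J[JX,Y]=-[X,Y]$ and $J[Z,JX]=-[Z,X]$ (and more generally $[JX,JY]=-[X,Y]$). I would also use the Hermitian hypothesis in the form $\langle JU,JV\rangle=\langle U,V\rangle$, which in particular makes $J$ skew-symmetric with respect to $\pint$.

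Next I would rewrite $d\omega=0$. Using the formula for $d\omega$ recalled in the footnote together with $\omega(U,V)=\langle JU,V\rangle$, the K\"ahler condition becomes
\[
\langle J[X,Y],Z\rangle+\langle J[Y,Z],X\rangle+\langle J[Z,X],Y\rangle=0
\qquad\text{for all }X,Y,Z\in\g .
\]
The key step is to substitute $X\mapsto JX$ in this identity---legitimate, since it holds for every triple---and to simplify the three resulting terms. The first and third collapse by the bi-invariance relations above (each contributing a sign from $J^2=-\I$), while the second becomes $\langle J[Y,Z],JX\rangle=\langle[Y,Z],X\rangle$ because $J$ is an isometry. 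This turns the K\"ahler identity into
\[
-c(X,Y,Z)+c(Y,Z,X)-c(Z,X,Y)=0 .
\]

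Finally I would exploit the cyclic symmetry. Writing $c_1=c(X,Y,Z)$, $c_2=c(Y,Z,X)$, $c_3=c(Z,X,Y)$, the last identity together with its two cyclic relabellings is a linear system in $c_1,c_2,c_3$; adding the three equations yields $c_1+c_2+c_3=0$, and substituting this back into any one of them forces $c_1=c_2=c_3=0$. Hence $\langle[X,Y],Z\rangle=0$ for all $X,Y,Z\in\g$, and nondegeneracy of $\pint$ gives $[X,Y]=0$, so $\g$ is abelian.

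I do not anticipate a real obstacle here: once one hits on the substitution $X\mapsto JX$ the argument is purely linear, so the only point requiring care is the sign bookkeeping in simplifying the three terms. (Conceptually, bi-invariance turns $(\g,J)$ into a complex Lie algebra, and the computation is just the statement that a left-invariant K\"ahler metric on a complex Lie group forces the structure constants to vanish; but the direct identity above is the shortest route.)
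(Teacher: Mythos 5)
Your proof is correct. Note first that the paper does not actually prove this lemma: it is stated as ``well known'' and used as a black box in the proof of Theorem \ref{J-bi}, so there is no in-paper argument to compare yours against. Your computation supplies a complete elementary proof: the bi-invariance identities $J[X,Y]=[JX,Y]=[X,JY]$, hence $J[JX,Y]=-[X,Y]$ and $J[Z,JX]=-[Z,X]$, are correctly derived; the K\"ahler condition written as $\langle J[X,Y],Z\rangle+\langle J[Y,Z],X\rangle+\langle J[Z,X],Y\rangle=0$ matches the sign convention of the paper's footnote; and the substitution $X\mapsto JX$ does yield $-c_1+c_2-c_3=0$ with $c_i$ the cyclic values of $\langle[\cdot,\cdot],\cdot\rangle$. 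The only cosmetic point is the last step: substituting $c_1+c_2+c_3=0$ into the single equation $-c_1+c_2-c_3=0$ gives $2c_2=0$, i.e.\ only $c_2=0$ directly; you need the other two cyclic relabellings (or the evident symmetry of the system) to conclude $c_1=c_3=0$ as well --- but since you have all three equations in hand, this is immediate (pairwise sums give $-2c_1=-2c_2=-2c_3=0$). Nondegeneracy of $\pint$ then gives $[X,Y]=0$, as you say.
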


\

\begin{proof}[Proof of Theorem]
The fact that $J$ is bi-invariant implies that \[J\g'=\g'.\]
Recall from \eqref{g-lck} the orthogonal decomposition
\[\g=\text{span}\{A\}\oplus \ker\theta.\]
\begin{lema}
The endomorphism $\ad_A: \g \to \g$ is symmetric.
\end{lema}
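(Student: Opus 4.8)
The plan is to exploit that $J$ is bi-invariant together with the fundamental l.c.K. identity $d\omega=\theta\wedge\omega$. First I would note that bi-invariance gives the compatibility $\langle[X,Y],Z\rangle_J$-type identities: concretely, since $J[X,Y]=[X,JY]=[JX,Y]$, the endomorphisms $\ad_X$ commute with $J$ for every $X\in\g$, and in particular $\ad_A J = J\ad_A$. The metric is Hermitian, so $J$ is skew-symmetric; combining $J\ad_A = \ad_A J$ with skew-symmetry of $J$ shows that $\ad_A$ is symmetric if and only if its transpose $(\ad_A)^t$ also commutes with $J$, which it automatically does. So the real content is to show $\ad_A$ is symmetric on the nose, and for that I would turn to Lemma \ref{Jad_JA-simetrico}.

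The key step is to relate $\ad_A$ to $\ad_{JA}$. Since $J$ is bi-invariant, $\ad_{JA} = \ad_{JA}$ and we have $\ad_{JA} = J\circ\ad_A$? Let me be careful: bi-invariance says $[JA,X] = J[A,X]$ for all $X$, i.e. $\ad_{JA} = J\circ\ad_A$. Hence $J\circ\ad_{JA} = J\circ J\circ\ad_A = -\ad_A$. By Lemma \ref{Jad_JA-simetrico}, $J\circ\ad_{JA}$ is symmetric; therefore $-\ad_A$ is symmetric, and thus $\ad_A$ is symmetric. This is the whole argument, and it is short once the pieces are assembled in the right order.

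The main obstacle — really the only thing to watch — is bookkeeping with signs and with which translate ($A$ versus $JA$) enters the identity $\ad_{JA} = J\circ\ad_A$ versus $\ad_{A} = J\circ\ad_{JA}$; applying $J$ to the wrong one, or mixing up $J^2 = -\I$, would flip the conclusion from "symmetric" to "skew-symmetric" (which by Lemma \ref{ad_A-antisim} would wrongly force the Vaisman case). So I would double-check: from $[JA,X]=J[A,X]$ we get $\ad_{JA}=J\ad_A$, hence $J\ad_{JA}=J^2\ad_A=-\ad_A$; Lemma \ref{Jad_JA-simetrico} makes the left side symmetric, so $\ad_A$ is symmetric. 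No further computation is needed.
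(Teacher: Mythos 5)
Your argument is correct and is exactly the paper's proof: bi-invariance gives $\ad_{JA}=J\circ\ad_A$, hence $J\circ\ad_{JA}=-\ad_A$, and Lemma \ref{Jad_JA-simetrico} then yields the symmetry of $\ad_A$. The preliminary remarks about $\ad_X$ commuting with $J$ are harmless but not needed.
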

\begin{proof}
Since $J$ is bi-invariant we have that $J\circ\ad_{JA}=-\ad_A$, and
it follows from Lemma \ref{Jad_JA-simetrico} that $\ad_A$ is
symmetric.
\end{proof}

Next, from \eqref{g-lck-1} we have
\[\g=\text{span}\{A,JA\}\oplus^\perp W,\]
where $\g'\subset\ker\theta=\text{span}\{JA\}\oplus W$ and $W$ is $J$-invariant.
It is easy to see that $\g'\subset W$. Actually
$\g'=W$, since for $X\in W$ we have
$d\omega(A,X,JX) = -2\langle [A,X],X\rangle$ and $\theta\wedge\omega(A,X,JX) =
|X|^2$, therefore
\begin{equation}
-2\langle [A,X],X\rangle=|X|^2,
\end{equation}
this implies that $\g'=W$. Then we obtain
\[\g=\text{span}\{A,JA\}\oplus^\perp \g',\]
with $\g'$ $J$-invariant.

On the other hand $(\g',J|_{\g'},\pint)$ is K\"ahler, since the fundamental form
of $\g'$ is the restriction of $\omega$ to
$\g'\times\g'$, and $d\omega=0$ on $\g'$. From Lemma \ref{bi-K} we get that
$\g'$ is abelian.

Since $\ad_A$ is symmetric and $d\omega(A,X,JY) = \theta\wedge\omega(A,X,JY)$,
we have that
\[2\langle [A,X],Y\rangle=-\langle X,Y\rangle, \quad \text{for } X,Y\in \g'.\]
Therefore $[A,X]=-\frac12 X$ for all $X\in\g'$. Setting $B=JA$, we
obtain $[A,B]=0$, thus $\g =\r^2\ltimes\r^{2n}$ where
$\ad_A|_{\r^{2n}}=-\frac12 \I$ and $\ad_B|_{\r^{2n}}=J\ad_A=-\frac12
J$.
\end{proof}

\medskip

\begin{cor}
There exists no unimodular Lie algebra $\g$ with an l.c.K. structure $(J,\pint)$ such that $J$ is a
bi-invariant complex
structure.
\end{cor}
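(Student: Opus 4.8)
The plan is to read the corollary straight off the structure theorem just proved. Suppose $\g$ carries an l.c.K. structure $(J,\pint)$ with $J$ bi-invariant. As in the setup leading to \eqref{g-lck} we may assume the structure is not K\"ahler, i.e.\ $\o\neq 0$: if it were K\"ahler, Lemma \ref{bi-K} would force $\g$ to be abelian, which is the degenerate case already excluded by the standing hypothesis $\o\neq0$ used throughout Section 4. Then Theorem \ref{J-bi} applies and gives an isomorphism $\g\simeq\r^2\ltimes\r^{2n}$, a $J$-invariant orthogonal sum, where $\r^2=\operatorname{span}\{A,B\}$ is abelian, $JA=B$, and $\ad_A|_{\r^{2n}}=-\tfrac12\I$, $\ad_B|_{\r^{2n}}=-\tfrac12 J$.

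The decisive step is then a one-line trace computation. Since $[A,A]=[A,B]=0$, the operator $\ad_A$ vanishes on $\r^2$, while on the complementary $J$-invariant ideal $\g'=\r^{2n}$ it acts as $-\tfrac12\I$. Hence $\tr(\ad_A)=-\tfrac12\cdot 2n=-n$. Because the l.c.K. structure is not K\"ahler we have $\g'\neq0$ (otherwise $d\w=0$ and $\o=0$ by nondegeneracy of $\w$), so $n\geq 1$ and therefore $\tr(\ad_A)=-n\neq 0$. A unimodular Lie algebra satisfies $\tr(\ad_X)=0$ for every $X\in\g$, so this contradiction proves that no such unimodular $\g$ exists.

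I do not expect a genuine obstacle here: all the real work is contained in Theorem \ref{J-bi}, and the corollary merely extracts a nonzero trace from the explicit brackets. The only point worth spelling out is why the abelian/K\"ahler situation does not yield a counterexample — i.e.\ why $n\geq 1$ — which is precisely the remark that for a non-K\"ahler l.c.K.\ structure $\o\neq0$ forces $\g'\neq0$. Once that is noted, the argument is complete.
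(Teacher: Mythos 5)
Your proof is correct and follows the paper's intended route: the paper states the corollary as an immediate consequence of Theorem \ref{J-bi}, and the justification is exactly your observation that $\ad_A$ acts as $-\tfrac12\I$ on $\g'=\r^{2n}$ with $n\geq 1$, so $\tr(\ad_A)=-n\neq 0$. Your side remark on excluding the K\"ahler/abelian case via Lemma \ref{bi-K} correctly pins down the standing assumption $\theta\neq 0$ that the paper uses implicitly.
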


Let $M$ be a compact complex parallelizable manifold. According to \cite{W}, $M$ may be written as a
quotient $\Gamma\backslash G$, where $G$ is a simply connected complex Lie group and $\Gamma$ is a
discrete subgroup. Note that according to \cite{Mi}, $G$ must be unimodular. Let $\pi:G\to M$ denote
the holomorphic projection.

\begin{cor}
With notation as above, $M$ does not admit any l.c.K. metric $g$ compatible with
its holomorphic structure such that $\pi^*g$ is
a left-invariant metric on $G$.
\end{cor}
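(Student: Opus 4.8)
The plan is to deduce the second corollary directly from the first. Suppose, for contradiction, that $M = \Gamma\backslash G$ admits an l.c.K.\ metric $g$ compatible with the holomorphic structure such that $\pi^*g$ is a left-invariant metric on $G$. Since $\pi$ is a holomorphic local diffeomorphism (indeed a covering map), the l.c.K.\ condition $d\omega = \theta\wedge\omega$ for the fundamental form $\omega$ of $g$ on $M$ pulls back to the analogous identity $d(\pi^*\omega) = (\pi^*\theta)\wedge(\pi^*\omega)$ on $G$, with $\pi^*\theta$ still closed. Because $\pi^*g$ is left-invariant and the complex structure $J$ on $G$ is the one coming from the complex Lie group structure, the pair $(J, \pi^*g)$ is a left-invariant l.c.K.\ structure on $G$ with a bi-invariant complex structure, hence descends to a left-invariant l.c.K.\ structure $(J,\pint)$ on the Lie algebra $\g$ of $G$ with $J$ bi-invariant.

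Next I would invoke the fact, recorded before the second corollary, that $G$ must be unimodular: by \cite{W} a compact complex parallelizable manifold is a quotient $\Gamma\backslash G$ with $G$ simply connected complex, and by \cite{Mi} the existence of a lattice $\Gamma$ in $G$ forces $G$ (equivalently $\g$) to be unimodular. But then $(\g, J, \pint)$ is a \emph{unimodular} Lie algebra carrying an l.c.K.\ structure with bi-invariant complex structure, which is exactly what the first corollary rules out. This contradiction proves the statement.

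The only point requiring a little care is the first step: one must check that an l.c.K.\ metric on the quotient $M$ whose pullback to $G$ is left-invariant genuinely yields an l.c.K.\ \emph{structure on the Lie algebra} $\g$ in the sense defined in Section~3 --- that is, that the Lee form on $M$ pulls back to a left-invariant closed $1$-form on $G$. This follows because the Lee form is canonically determined by $\omega$ via formula \eqref{tita} (or, equivalently, by \eqref{lck} together with non-degeneracy of $\omega$), so $\pi^*\theta$ is the Lee form of $(J,\pi^*g)$, and by the Proposition of Section~3 the Lee form of a left-invariant l.c.K.\ structure on a Lie group is automatically left-invariant. I do not anticipate any serious obstacle beyond this bookkeeping; the content of the result is entirely carried by Theorem~\ref{J-bi} and its first corollary together with the unimodularity of complex parallelizable manifolds.
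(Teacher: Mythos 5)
Your argument is correct and is precisely the deduction the paper intends: pull the metric back to $G$, observe that the complex structure of a complex Lie group is bi-invariant and that $G$ is unimodular by \cite{Mi} since it admits the discrete cocompact subgroup $\Gamma$, and then invoke the preceding corollary at the Lie algebra level. The paper states this corollary without proof, but the sentences immediately before it set up exactly the two ingredients you use, and your care about the Lee form being left-invariant (via \eqref{tita} and the Proposition of Section~3) is the right bookkeeping.
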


\begin{nota}
In \cite{HK}, it is proved more generally that a compact complex parallelizable
manifold does not admit any l.c.K. metric
compatible with its holomorphic structure.
\end{nota}

\

\section{L.c.K. structures with abelian complex structure}

\

In this section we consider a Lie algebra equipped with an l.c.K. structure such that its complex
structure is abelian. Our aim is to prove the following result, where we show that the only
unimodular Lie algebras that admit such metrics are the product of a Heisenberg Lie algebra by $\r$,
and the l.c.K. structure is in fact Vaisman. From now on we assume that the Lie algebras we work
with are at least $4$-dimensional.

\

Before stating the main result, we consider the following variation of Example \ref{heisenberg}.
Recall that $\r\times\mathfrak{h}_{2n+1}$ has a basis $\{X_1,\dots,X_n,Y_1,\dots,Y_n,Z_1,Z_2\}$
such that $[X_i,Y_i]=Z_1$ for $i=1,\dots,n$, and that this Lie algebra admits an abelian complex
structure $J_0$ given by $J_0X_i=Y_i,\, J_0Z_1=-Z_2$. For any $\lambda>0$, consider the metric
$\pint_\l$ such that the basis above is orthogonal, with $|X_i|=|Y_i|=1$ but
$|Z_1|^2=|Z_2|^2=\frac{1}{\lambda}$. It is easy
to see (just as in Example \ref{heisenberg}) that $(J_0,\pint_\l)$ is an l.c.K. structure, in fact,
it is Vaisman. Furthermore, the metrics $\pint_\l$ are pairwise non-isometric, since the scalar
curvature of $\pint_\l$ is $-\frac{n\lambda^2}{2}$. 

\

\begin{teo}\label{main-theorem}
Let $(J,\pint)$ be an l.c.K. structure on $\g$ with abelian complex structure $J$. If $\g$ is
unimodular then $\g \simeq \r\times\h_{2n+1}$, where $\mathfrak{h}_{2n+1}$ is the
$(2n+1)$-dimensional Heisenberg Lie algebra, and $(J,\pint)$ is equivalent to $(J_0,\pint_\l)$ for
some $\l>0$.
\end{teo}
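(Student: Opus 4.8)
The plan is to reduce the structure of $\g$ step by step, using the orthogonal splitting $\g=\mathrm{span}\{A,JA\}\oplus^\perp W$ from \eqref{g-lck-1}, and then exploit unimodularity at the very end. Throughout I would write $B=JA$ and keep in mind the properties of abelian complex structures collected in Lemma \ref{prop}: $\g'$ is abelian, $\z(\g)$ is $J$-invariant, and $\g'\cap J\g'\subset\z(\g'+J\g')$. First I would extract algebraic consequences of the l.c.K. identity $d\w=\o\wedge\w$ by plugging in various triples of basis vectors, exactly as in the proof of Theorem \ref{J-bi}. Evaluating $d\w$ on triples $(A,X,JX)$ with $X\in W$ gives, as before, a relation of the form $-2\la[A,X],X\ra=|X|^2$ modulo terms involving $B$; combined with Lemma \ref{Jad_JA-simetrico} (so that $J\ad_B$ is symmetric) and the fact that $J$ is abelian (so $[JX,JY]=[X,Y]$, which forces extra symmetry on $\ad_A,\ad_B$ restricted to $W$), I expect to pin down $\ad_A$ and $\ad_B$ on $W$ up to a scalar and a nilpotent/skew correction. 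The abelian condition on $J$ should in particular kill any nontrivial semisimple part of $\ad_A|_W$: if $\ad_A|_W$ had eigenvalues other than a single fixed value, the identity $[JX,JY]=[X,Y]$ together with $\g'$ abelian would be violated. This is the step I expect to do the real bookkeeping.

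Next I would analyze the bracket structure \emph{inside} $W$ and between $W$ and $\mathrm{span}\{A,B\}$. Since $\g'$ is abelian (Lemma \ref{prop}(4)) and has codimension at least $2$ unless $\g\cong\aff(\r)$ (excluded since $\dim\g\ge 4$), and since $\g'\subset\ker\theta$, the commutator must sit inside $W\oplus\mathrm{span}\{B\}$ with small codimension. I would argue that $[W,W]$ is one-dimensional, spanned by a vector that — after using $J$-invariance of the center and the containment $\g'\cap J\g'\subset\z$ — can be identified with a central direction; this is where the Heisenberg shape $[X_i,Y_i]=Z_1$ emerges. The pairing $\w$ restricted to $W$ is symplectic, and the l.c.K. identity evaluated on triples from $W$ controls $[W,W]$ via $\w([U,V],\cdot)$, so I would get $[U,V]=\w(U,V)\,Z_1$ for a fixed $Z_1$. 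Then $JZ_1$ must be central (it is forced to lie in $\g'\cap J\g'\subset\z(\g)$, and by $J$-invariance of $\z(\g)$), giving the two extra central-ish directions $Z_1,Z_2=-JZ_1$; unwinding the previous paragraph, $A$ and $B$ act on the $X_i,Y_i$ by a scalar $-\tfrac12$ plus possibly skew terms, and they must annihilate $Z_1,Z_2$ or act by scalars there.

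Finally I would invoke \textbf{unimodularity}. At this point $\g$ is, up to the undetermined pieces, $\mathrm{span}\{A,B\}\ltimes\h_{2n+1}$ with $\ad_A$ and $\ad_B$ acting on the Heisenberg part; computing $\tr\ad_A$ and $\tr\ad_B$ and setting them to zero should force the scalar action of $A,B$ on the $2n$-dimensional symplectic part to be balanced against their action on $\langle Z_1,Z_2\rangle$ in a way that is only consistent if $\ad_A=\ad_B=0$ on the whole Heisenberg part — i.e.\ the semidirect product degenerates to a direct sum, $A$ and $B$ become central, and $\mathrm{span}\{A,B\}\cong\r^2$ contributes $\r\times(\text{something})$. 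But $A\notin\g'$ and $\theta(A)=1$, while $\theta$ must be closed; combined with $[A,B]=0$ one gets that $B$ together with the rest forms $\h_{2n+1}$ and $A$ spans the $\r$ factor, yielding $\g\cong\r\times\h_{2n+1}$. The main obstacle I anticipate is ruling out the ``twisted'' possibilities in the previous paragraph — a priori $\ad_A|_W$ could be $-\tfrac12\I$ plus a nonzero skew-symmetric commuting-with-$J$ operator, and one must show unimodularity plus the closedness of $\theta$ plus the abelian condition leave no room for it; I would handle this by a trace/Jacobi-identity argument. Once the Lie algebra is identified, the metric statement follows: the $J$-Hermitian, $\ad_A$-compatible metrics on $\r\times\h_{2n+1}$ with $\theta=z^2$ form exactly the one-parameter family $\pint_\l$ described before the theorem (the scalar $\l=|Z_1|^{-2}$ is the only free invariant, and distinct $\l$ give distinct scalar curvature $-n\l^2/2$, hence non-isometric metrics), and each is Vaisman by Lemma \ref{ad_A-antisim} since $\ad_A$ is skew-symmetric.
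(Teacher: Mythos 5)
Your outline transplants the computation from the bi-invariant case, but the analogy fails at the very first step. For abelian $J$ the identity $d\w(A,X,JX)=\o\wedge\w(A,X,JX)$ reads
\[
-\la[A,X],X\ra-\la[A,JX],JX\ra+\la[X,JX],JA\ra=|X|^2 ,
\]
and without bi-invariance you cannot merge the first two terms into $-2\la[A,X],X\ra$; in the actual structure the whole right-hand side is accounted for by $\la[X,JX],JA\ra$ and $\ad_A$ is \emph{zero}, not $-\tfrac12\I$ plus a correction. Your plan is also internally inconsistent: an operator of the form $-\tfrac12\I+(\text{skew})$ on a $2n$-dimensional space has trace $-n\neq 0$ no matter what the skew part is, so "unimodularity degenerates it to $0$" cannot be the mechanism — if that normal form ever held, the theorem would assert an empty class. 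Two further concrete errors: $[W,W]$ is not contained in $W$ (in the end it equals $\r\, JA$, which is orthogonal to $W$), and $JZ_1$ does not lie in $\g'\cap J\g'$ (that intersection turns out to be $\{0\}$, and Lemma \ref{prop} only places it in $\z(\g'+J\g')$, not $\z(\g)$), so your route to the centrality of $Z_2$ does not work.

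More importantly, the proposal has no counterpart to the steps that carry the real weight of the paper's argument. (i) One must first prove that the orthogonal complement $\h$ of $\g'+J\g'$ is nonzero; the paper does this by showing that otherwise $\g\simeq\aff(\mathcal{A})$ for a commutative associative algebra with $\mathcal{A}^2=\mathcal{A}$, which contradicts unimodularity via Lemma \ref{affA}. (ii) One must prove $JA\in\g'$, again by a trace computation using unimodularity. (iii) Only with a nonzero $H\in\h$ in hand, and the identity $[H,JH]=\frac{|H|^2}{|A|^2}JA$ obtained from Cauchy--Schwarz, can one show via Jacobi that every nonzero eigenvalue of $\ad_A$ other than $-\tfrac12$ is impossible, and then exclude $-\tfrac12$ by unimodularity; this is how $A,JA\in\z(\g)$ is actually reached. (iv) A final, separate trace argument shows $[X,JY]=0$ for orthogonal $X,Y\in\g'$, which is what forces $\g'=\r\, JA$ to be one-dimensional and produces the Heisenberg brackets $[X_i,JX_i]=\frac{|X_i|^2}{|A|^2}JA$. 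Your "trace/Jacobi bookkeeping" placeholder does not supply any of these, and a bare trace condition cannot by itself rule out eigenvalue configurations such as $\{\l,-\l-1\}$ with balancing multiplicities. The identification of the metric with $\pint_\l$ at the end is fine once the bracket structure is pinned down, but the path to that structure is the theorem, and it is missing here.
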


\

We will provide the proof of this theorem in a series of results. Recall from
\eqref{g-lck} that
\[ \g=\text{span}\{A\}\oplus \ker\theta, \]
where $A\in(\ker\theta)^\perp$ such that $\theta(A)=1$.

\begin{lema}\label{ad_A-sim}
The endomorphism $\ad_A: \g \to \g$ is symmetric.
\end{lema}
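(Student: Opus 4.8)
The plan is to show $\ad_A$ is symmetric by combining Lemma~\ref{Jad_JA-simetrico} with the special structure of abelian complex structures. Recall that Lemma~\ref{Jad_JA-simetrico} already gives us that $J\circ\ad_{JA}$ is symmetric. In the bi-invariant case one had the identity $J\circ\ad_{JA}=-\ad_A$, which made the conclusion immediate; here the abelian condition does not give such a clean relation, so more work is needed. First I would write $A'=JA=B$ for brevity and exploit the abelian identity $[JX,JY]=[X,Y]$ together with the fact (from \eqref{g-lck-1}) that $\g=\text{span}\{A,B\}\oplus^\perp W$ with $W$ being $J$-invariant and $\g'\subset\ker\theta$.

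The key computation I expect to carry out is to evaluate $d\w$ on triples involving $A$ and then on triples not involving $A$, extracting relations on $\ad_A$ and $\ad_B$. Since $\theta=\langle\,\cdot\,,A\rangle/|A|^2$ and $\g'\subset\ker\theta$, the wedge $\theta\wedge\w$ simplifies considerably: $\theta\wedge\w(X,Y,Z)$ only picks up terms where one of the arguments pairs nontrivially with $A$. In particular for $X,Y,Z\in\ker\theta$ we get $\theta\wedge\w(X,Y,Z)=0$, so $d\w$ vanishes on $\ker\theta$; this says the restriction of $\w$ to $\ker\theta$ is closed, which is a strong constraint. Then plugging $A$ into one slot, $d\w(A,X,Y)=\theta\wedge\w(A,X,Y)$ for $X,Y\in\ker\theta$ unwinds (using $\w(\cdot,\cdot)=\langle J\cdot,\cdot\rangle$ and the closedness just obtained) into an identity relating $\langle J[A,X],Y\rangle$ to $\langle X,Y\rangle$-type terms. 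Combining this with the symmetry of $J\circ\ad_B$ from Lemma~\ref{Jad_JA-simetrico} and the abelian relation should pin down that $\ad_A$ is self-adjoint.

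The main obstacle I anticipate is correctly handling the interaction between $\ad_A$ and $\ad_B=\ad_{JA}$: the abelian condition relates brackets of $J$-images, so $[JA,JX]=[A,X]$, i.e. $\ad_B\circ J|_{\{A\}^\perp\cap J^{-1}\cdots}$ is tied to $\ad_A$, but one must be careful because $JA=B$ is itself not in $W$, so applying $J$ to $A$ takes us out of the ``generic'' part. I would therefore first record the precise form of the bracket relations coming from integrability/abelianness restricted to each summand of $\g=\text{span}\{A,B\}\oplus^\perp W$, in particular expressions for $[A,B]$, $[A,W]$, $[B,W]$, and $[W,W]$, and only then feed these into the $d\w=\theta\wedge\w$ identity. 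Once the bookkeeping is set up, the symmetry of $\ad_A$ should follow by the same style of argument as in the bi-invariant case, namely by showing $\langle[A,X],Y\rangle=\langle X,[A,Y]\rangle$ directly from an evaluation of $d\w$ on $(A,X,Y)$ or $(B,X,Y)$ for suitable $X,Y$.
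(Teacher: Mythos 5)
Your instinct to lean on Lemma~\ref{Jad_JA-simetrico} is the right one, but the proposal stalls precisely where the paper's proof ends. You assert that the abelian condition ``does not give such a clean relation'' between $\ad_A$ and $J\circ\ad_{JA}$, and on that basis you launch a programme of evaluating $d\w$ on various triples and cataloguing the brackets $[A,B]$, $[A,W]$, $[B,W]$, $[W,W]$ --- none of which is actually carried out, and all of which is unnecessary. The abelian identity applied to the pair $(A,X)$ gives exactly the clean relation you say is missing: $[JA,JX]=[A,X]$, i.e.\ $\ad_{JA}\circ J=\ad_A$. Together with the $J$-invariance of the metric this makes the lemma immediate:
\[
\la [A,X],Y\ra=\la [JA,JX],Y\ra=\la J[JA,JX],JY\ra=\la J[JA,JY],JX\ra=\la [JA,JY],X\ra=\la [A,Y],X\ra,
\]
where the middle equality is the symmetry of $J\circ\ad_{JA}$ from Lemma~\ref{Jad_JA-simetrico} applied to $JX, JY$, and the outer equalities use the abelian condition and $\la J\cdot,J\cdot\ra=\la\cdot,\cdot\ra$. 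This is the paper's proof (stated there as ``an immediate consequence'').

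As written, your text is a plan rather than a proof: the decisive steps are deferred with ``should pin down'' and ``should follow by the same style of argument,'' and the one observation that closes the argument is explicitly (and incorrectly) ruled out. Note also that the route you sketch cannot succeed as described: evaluating $d\w(A,X,Y)=\o\wedge\w(A,X,Y)$ yields the relation $\la [A,X],JY\ra+\la [X,Y],JA\ra+\la [Y,A],JX\ra=\la JX,Y\ra$, which constrains the combination $\la\ad_A X,JY\ra-\la\ad_A Y,JX\ra$ rather than the symmetric part of $\ad_A$ itself; to convert information about $\ad_{JA}$ (which is what Lemma~\ref{Jad_JA-simetrico} provides) into information about $\ad_A$ you still need the identity $\ad_{JA}\circ J=\ad_A$. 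So the missing ingredient is not more bookkeeping but that single identity.
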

\begin{proof}
It is an immediate consequence of Lemma \ref{Jad_JA-simetrico}.
\end{proof}

\

In particular $\ad_A|_{\ker\o}: \ker\o\to\ker\o$ is symmetric, therefore it is
diagonalizable and thus we get the following
decomposition:
\[\ker\o=\sum_{\l\in S} \gl,\]
where $S$ is the spectrum of $\ad_A|_{\ker\o}$ and $\gl$ is the eigenspace
associated with the eigenvalue $\l$.

According to Lemma \ref{prop} $\riii$ the codimension of $[\g,\g]$ is at least
2. Therefore $\go\neq\{0\}$, that is $0\in S$. Hence we obtain the following
orthogonal decomposition:
\begin{equation}\label{dec-g}
\g=\r A \oplus \go \oplus \sum_{\l\in S^*} \gl,
\end{equation}
where $S^*:=S-\{0\}.$ Note that the Jacobi's identity together with
the fact that $\g'$ is abelian imply that:
\begin{itemize}
 \item $\gl$ is an ideal for $\l\in S^*$.
 \item $\go$ is a subalgebra.
\end{itemize}


Now we consider $\gcc=[\go, \go]$ and $\gccc$, its orthogonal
complement in $\go$, that is, \[\go=\gcc \oplus \gccc.\] Note also
that $\displaystyle{\g'=\gcc\oplus\sum_{\l\in S^*}\gl}.$

For any $X,Y \in \ker\o$, we compute
\begin{align*}
d\omega(A,X,Y) &= -\omega([A,X],Y)-\omega([X,Y],A)-\omega([Y,A],X)\\
&= -\langle J[A,X],Y\rangle -\langle J[X,Y],A\rangle-\langle J[Y,A],X\rangle \\
&= \langle [A,X],JY\rangle+\langle [X,Y],JA\rangle+\langle [Y,A],JX\rangle
\end{align*}
\begin{align*}
\theta\wedge\omega(A,X,Y) &=
\theta(A)\omega(X,Y)+\theta(X)\omega(Y,A)+\theta(Y)\omega(A,X) \\
&=\langle JX,Y\rangle.
\end{align*}
Therefore we have
\[\langle [A,X],JY\rangle+\langle [X,Y],JA\rangle+\langle [Y,A],JX\rangle
=\langle JX,Y\rangle,\] for all $X,Y \in \ker\o$.
From this we get three equations that will be important later:
\begin{equation}\label{eq1}
\langle [X,Y],JA\rangle=\langle JX,Y\rangle, \; \mbox{for $X,Y \in \go$}.
\end{equation}
\begin{equation}\label{eq2}
(\l + \u + 1)\langle JX,Y\rangle=0, \; \mbox{for $X\in\gl, Y\in\gm$
and $\l,\u\in S^*$}.
\end{equation}
\begin{equation}\label{eq3}
\langle [X,Y],JA\rangle=(\u + 1)\langle JX,Y\rangle, \; \mbox{for
$X\in\go, Y\in\gm$,  $\u\in S^*$}.
\end{equation}

\medskip

\begin{lema}\label{Jg0c}
$J(\gcc)\subset \r A\oplus\gccc.$
\end{lema}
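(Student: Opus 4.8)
The claim is equivalent to showing $\langle JX,Y\rangle=0$ for all $X\in\gcc$ and all $Y\in\g'$. Indeed, combining the orthogonal decomposition $\g=\r A\oplus\gcc\oplus\gccc\oplus\sum_{\l\in S^*}\gl$ with the identity $\g'=\gcc\oplus\sum_{\l\in S^*}\gl$, the orthogonal complement of $\r A\oplus\gccc$ is exactly $\g'$, so $J(\gcc)\subset\r A\oplus\gccc$ means precisely $J(\gcc)\perp\g'$. The plan is to run through the summands of $\g'$ one at a time. Throughout we use the elementary fact that, since $\gcc\subset\g'$ and $\g'$ is abelian by Lemma \ref{prop} $\riv$, we have $[X,Y]=0$ whenever $X\in\gcc$ and $Y\in\g'$.

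For $Y\in\gcc$ we have $X,Y\in\go$, so \eqref{eq1} gives $\langle JX,Y\rangle=\langle[X,Y],JA\rangle=0$. For $Y\in\gl$ with $\l\in S^*$ and $\l\neq-1$, we have $X\in\go$, $Y\in\gl$, so \eqref{eq3} gives $(\l+1)\langle JX,Y\rangle=\langle[X,Y],JA\rangle=0$ and hence $\langle JX,Y\rangle=0$. This disposes of every summand except a possible component of $J(\gcc)$ along $\g_{-1}$ in the case $-1\in S^*$.

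That remaining case --- proving $\langle JX,Y\rangle=0$ for $X\in\gcc$ and $Y\in\g_{-1}$ --- is the crux, since \eqref{eq3} degenerates there. I would proceed as follows. First, applying \eqref{eq2} with $\u=-1$ (note $\l+\u+1=\l\neq0$ for every $\l\in S^*$) shows that $\omega$ vanishes on $\gl\times\g_{-1}$ for all $\l\in S^*$, hence $J(\g_{-1})\perp\sum_{\l\in S^*}\gl$, i.e. $J(\g_{-1})\subset\r A\oplus\go$. Since $J$ is skew-symmetric, the statement still to be proved is equivalent to the vanishing of the $\gcc$-component of $JZ$ for every $Z\in\g_{-1}$. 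To obtain this I would try to show that $\gcc$ is central in $\g$: one already has $[\gcc,A]=0$ and $[\gcc,\gl]\subset[\g',\g']=0$ for $\l\in S^*$, so it remains to prove $[\gcc,\gccc]=0$, i.e. that $\go$ is $2$-step nilpotent --- and here the description of $\omega|_{\go\times\go}$ given by \eqref{eq1}, namely that it equals the exact $2$-cocycle $(X,Y)\mapsto\langle[X,Y],JA\rangle$ on $\go$, should be the relevant input. Granting $\gcc\subset\z(\g)$, Lemma \ref{prop} $\ri$ gives $J(\gcc)\subset\z(\g)\subset\ker(\ad_A)=\r A\oplus\go$, and combining this with the case $Y\in\gcc$ already settled (which says the $\gcc$-component of $J(\gcc)$ is zero) forces $J(\gcc)\subset\r A\oplus\gccc$. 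If the centrality of $\gcc$ turns out to be awkward to establish at this stage, the alternative is to attack $\g_{-1}$ head-on by expanding $d\omega(U,V,Z)=0$ for $U,V\in\go$, $Z\in\g_{-1}$ (all three lie in $\ker\theta$, so $\theta\wedge\omega$ annihilates them), which writes $\omega([U,V],Z)$ as a combination of $\omega$-values pairing $\g_{-1}$ with $\go$, and then bootstrapping. I expect this $\g_{-1}$ case to be the only real obstacle; everything before it is a direct application of \eqref{eq1}--\eqref{eq3} together with the fact that $\g'$ is abelian.
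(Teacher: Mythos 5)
Your reduction of the statement to $J(\gcc)\perp\g'$ is correct, and the two cases you settle are settled correctly: $J(\gcc)\perp\gcc$ follows from \eqref{eq1} together with $\g'$ abelian (this is exactly the second half of the paper's proof), and $J(\gcc)\perp\gl$ for $\l\in S^*$, $\l\neq -1$ follows from \eqref{eq3}. But the case $Y\in\g_{-1}$ is a genuine gap, not a loose end: you name it as the crux and then offer two routes, neither of which is carried out. The first route requires $[\gcc,\gccc]=0$, i.e.\ that $\go$ is $2$-step nilpotent; nothing proved up to this point in the paper gives that ($\go$ is not $J$-invariant, so Lemma \ref{prop} does not apply to it, and \eqref{eq1} only computes $\la J\cdot,\cdot\ra$ on $\go$, which does not control $[\gcc,\gccc]$). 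In fact the centrality of $\g'$ is only obtained much later (Proposition \ref{A-central}), using unimodularity, whereas Lemma \ref{Jg0c} must be available before that. The second route ("expand $d\omega(U,V,Z)$ and bootstrap") is not an argument. So as written the proof is incomplete precisely where \eqref{eq3} degenerates.

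The paper closes all the eigenvalue cases at once, with no special role for $\l=-1$, by using the abelian property of $J$ as a bracket identity rather than through the inner-product relations \eqref{eq1}--\eqref{eq3}. Write $J[X,Y]=aA+Z_0+\sum_{\l\in S^*}Z_\l$; then $[A,J[X,Y]]=\sum_{\l\in S^*}\l Z_\l$. On the other hand $J$ abelian gives $[A,J[X,Y]]=-[JA,[X,Y]]$, and decomposing $JA=X_0+\sum_{\l\in S^*}X_\l$, the components $X_\l$ lie in $\g'$, which is abelian, so $[JA,[X,Y]]=[X_0,[X,Y]]\in\go$ because $\go$ is a subalgebra. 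Comparing with the eigenspace decomposition forces $Z_\l=0$ for every $\l\in S^*$ simultaneously; then \eqref{eq1} kills the $\gcc$-component of $Z_0$ as in your first case. If you want to salvage your case-by-case scheme, you need an independent argument for $\la J\gcc,\g_{-1}\ra=0$, and the identity $[A,JW]=-[JA,W]$ is the missing ingredient.
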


\begin{proof}
If $X,Y \in \go$ we can write $\displaystyle{J[X,Y]=aA+Z_0+\sum_{\l\in
S^*}Z_\l}$ with $a\in\r$, $Z_0\in\go$ and $Z_\l\in\gl$. Then $\displaystyle{[A,
J[X,Y]]=[A,\sum_{\l\in S^*}Z_\l]=\sum_{\l\in S^*}\l Z_\l}$.

On the other hand, from \eqref{dec-g},
$JA=X_0 + \displaystyle{\sum_{\l\in S^*}X_\l},$
with $X_0\in\go$ y $X_\l \in \gl$ for all $\l\in S^*$. Then \[[A, J[X,Y]]=-[JA,
[X,Y]]=-[X_0 + \displaystyle{\sum_{\l\in S^*}X_\l}, [X,Y]]=-[X_0,[X,Y]] \in
\go\] since $\go$ is subalgebra and $\g'$ is abelian. Therefore $Z_\l=0$  for
all $\l\in
S^*$.

Moreover, it follows from \eqref{eq1} and the fact that $\g'$ is abelian that
$J(\gcc)$ and $\gcc$ are orthogonal. Then we must have $Z_0\in\gccc$, and this
implies the result.
\end{proof}

Now, we define $\L\subset S^*$ in the following way: $\l\in\L$ if
and only if there is not any $\l'\in S^*$ such that $\l+\l'+1=0$, or
equivalently, $\L=\{\l\in S^*: -(\l+1)\notin S^*\}$. Note that
$\l\notin\L$ if and only if $-(\l+1)\notin\L$. 

\medskip

\begin{lema}\label{Jgl}
Let $\l\in S^*$. Then,
\begin{enumerate}
\item [\ri] if $\l\in\L$ then $J(\gl)\subset \r A\oplus\gccc.$
\item [\rii] if $\l\in\L^c$ then $J(\gl)\subset \r A\oplus\gccc\oplus\gll,$
where $\l+\ll+1=0$.
\end{enumerate}
\end{lema}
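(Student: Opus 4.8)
The plan is to prove both parts simultaneously by analyzing, for a fixed $\l\in S^*$ and an arbitrary $X\in\gl$, the decomposition of $JX$ relative to \eqref{dec-g}. Write $JX = aA + X_0 + \sum_{\u\in S^*} X_\u$ with $a\in\r$, $X_0\in\go$ and $X_\u\in\gm$. The strategy mirrors the proof of Lemma \ref{Jg0c}: I would apply $\ad_A$ to $JX$ in two different ways and compare. On one side, $[A,JX] = \sum_{\u\in S^*}\u X_\u$ directly from the eigenspace decomposition. On the other side, since $J$ is abelian we have $[A,JX] = [JA,X] + J[A,X] = [JA,X] + \l JX$ (using $[JA,JX]=[A,X]$ rearranged appropriately); here $[JA,X]$ can be controlled because $X\in\gl\subset\g'$, $\g'$ is abelian by Lemma \ref{prop}(4), and $JA = X_0' + \sum_{\mu\in S^*}X'_\mu$, so $[JA,X] = [X_0',X]$ lands in $\gl$ (as $\gl$ is an ideal and $\g'$ is abelian, bracketing the $\gm$-parts of $JA$ against $X\in\g'$ gives zero). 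Thus $[A,JX]$ equals an element of $\gl$ plus $\l JX = \l aA + \l X_0 + \sum_\mu \l X_\mu$.

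Comparing the two expressions for $[A,JX]$ componentwise in \eqref{dec-g} forces: the $A$-component gives $\l a = 0$, hence $a=0$ when... wait, no — actually I expect $a$ may be nonzero, so let me instead read off that $\l a = 0$ cannot be right; rather the right-hand side's $A$-component is $\l a$ and the left-hand side has no $A$-component, giving $\l a = 0$, so $a = 0$ since $\l\neq 0$. Hmm, but the statement allows $\r A$ in the image. I would therefore be more careful: the element of $\gl$ appearing on the right may itself be absorbed, and the correct bookkeeping is that for $\u\neq\l$ one gets $\u X_\u = \l X_\u$, forcing $X_\u = 0$ whenever $\u\neq\l$ and $\u\neq$ (the index matching the $\gl$-correction term). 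The genuinely surviving indices are $\u = \l$ and the special index $\ll$ with $\l+\ll+1=0$: equation \eqref{eq2} is what kills $X_\l$ itself (pairing $JX$ against $\gl$ via $(\l+\l+1)\langle JX,\cdot\rangle$, which vanishes unless $2\l+1=0$), and more generally \eqref{eq2} eliminates every $X_\u$ with $\u\neq\ll$. So the clean argument is: use the $\ad_A$-comparison to show $X_\u = 0$ for all $\u\in S^*$ except possibly $\u=\ll$, then separately handle whether $\ll\in S^*$.

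Concretely I would argue: (a) from the $\ad_A$ identity, $X_\u=0$ unless $\u$ satisfies a resonance relation with $\l$; (b) from \eqref{eq2}, $\langle JX, \gm\rangle = 0$ unless $\l+\u+1=0$, i.e. $X_\u$'s contribution is orthogonal to $\gm$ unless $\u=\ll$; combining, $X_\u=0$ for all $\u\in S^*\setminus\{\ll\}$. If $\l\in\L$ then by definition $-(\l+1)=\ll\notin S^*$, so \emph{all} $X_\u=0$ and $JX = aA + X_0$; then the orthogonality $J(\gl)\perp\gcc$ — proved exactly as at the end of Lemma \ref{Jg0c}, using \eqref{eq3} or \eqref{eq1} together with abelianness of $\g'$ — forces $X_0\in\gccc$, giving $JX\in\r A\oplus\gccc$, which is \ri. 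If $\l\in\L^c$, then $\ll\in S^*$ is permitted, and we get $JX\in\r A\oplus\gccc\oplus\gll$ after the same orthogonality argument places $X_0$ in $\gccc$, which is \rii.

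The main obstacle I anticipate is the bookkeeping in step (b), specifically making the $\ad_A$-comparison fully rigorous: one must be careful that $[JA,X]$ really lies in $\gl$ and does not contribute to other eigenspaces, and one must correctly handle the possibility that $X_\l\neq 0$ a priori before \eqref{eq2} is invoked to kill it. A secondary subtlety is confirming that $a$ (the $\r A$-component) is genuinely unconstrained — it is not eliminated by these relations, which is why $\r A$ appears in the conclusion — as opposed to the $\ad_A$-comparison seeming to force $\l a=0$; the resolution is that the $\gl$-valued correction term $[JA,X] = [X_0',X]$ has no $A$-component, so the comparison only constrains the $\gm$ ($\u\in S^*$) and $\go$ parts, leaving $a$ free. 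Once these points are pinned down, everything else is the routine componentwise matching already rehearsed in Lemma \ref{Jg0c}.
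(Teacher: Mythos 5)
Your step (b) --- using \eqref{eq2} together with the orthogonality of the decomposition \eqref{dec-g} to conclude that every $\gm$-component of $JX$ with $\l+\u+1\neq 0$ vanishes, and then invoking Lemma \ref{Jg0c} with the skew-symmetry of $J$ (for $Y\in\gcc$, $\la JX,Y\ra=-\la X,JY\ra=0$ since $JY\in\r A\oplus\gccc\perp\gl$) to place the $\go$-component in $\gccc$ --- is exactly the paper's proof and is already complete on its own. The $\ad_A$-comparison in step (a) is therefore redundant, which is fortunate because the identity $[A,JX]=[JA,X]+J[A,X]$ on which you base it is not what abelianness gives (an abelian $J$ satisfies $[JU,V]=-[U,JV]$, so $[A,JX]=-[JA,X]$ with no $\l JX$ term); simply delete that step.
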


\begin{proof}
$\ri$ If $\l\in\L$, from \eqref{eq2} we have that $J(\gl)$ is orthogonal to
$\g_\mu$ for all $\mu\in S^*$, and therefore $J(\gl)\subset \r A\oplus\g_0.$
Moreover, for $X_\lambda\in\gl, Y\in\g_0'$, it follows from Lemma \ref{Jg0c}
that $\langle JX_\lambda, Y\rangle=-\langle X_\lambda, JY\rangle=0$. This proves
$\ri$, and in a similar way we prove $\rii$. 
\end{proof}

\medskip

Let $\h$ be the orthogonal complement of $\g' + J\g'$ in $\g$. Note that $\h$ is
$J$-invariant. Thus, we can write
\begin{equation}\label{g'-Jg'-h}
\g=(\g' + J\g')\oplus\h.
\end{equation}

We will show that this orthogonal complement $\h$ is non-zero. We begin with an auxiliary result.

\begin{lema}\label{int}
$\displaystyle{\g'\cap J\g'=\sum_{\L^c}\gl \cap J\left(\sum_{\L^c}\gl\right)}$.
\end{lema}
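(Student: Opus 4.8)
The plan is to use Lemmas~\ref{Jg0c} and \ref{Jgl} to locate $J\g'$ precisely inside the orthogonal decomposition
\[\g=\r A\oplus\gcc\oplus\gccc\oplus\Big(\bigoplus_{\l\in\L}\gl\Big)\oplus\Big(\bigoplus_{\l\in\L^c}\gl\Big),\]
and then to intersect with $\g'=\gcc\oplus\sum_{\l\in S^*}\gl$ componentwise.

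First I would note that $\g'$ is spanned by $\gcc$ together with all the $\gl$, $\l\in S^*$. By Lemma~\ref{Jg0c}, $J(\gcc)\subset\r A\oplus\gccc$; by Lemma~\ref{Jgl} $\ri$, $J(\gl)\subset\r A\oplus\gccc$ for $\l\in\L$; and by Lemma~\ref{Jgl} $\rii$, $J(\gl)\subset\r A\oplus\gccc\oplus\gll$ for $\l\in\L^c$, with $\l+\ll+1=0$. Summing these inclusions and using that $\l\mapsto -(\l+1)$ is an involution of $\L^c$, so that $\sum_{\l\in\L^c}\gll=\sum_{\l\in\L^c}\gl$, one obtains
\[J\g'\subset\r A\oplus\gccc\oplus\sum_{\l\in\L^c}\gl.\]

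The key step is then elementary linear algebra in the orthogonal decomposition above. If $v\in\g'\cap J\g'$, then membership in $\g'$ forces the components of $v$ along $\r A$ and $\gccc$ to vanish, while membership in $J\g'$ forces the components along $\gcc$ and along every $\gl$ with $\l\in\L$ to vanish; hence $v\in\sum_{\l\in\L^c}\gl$, that is, $\g'\cap J\g'\subset\sum_{\l\in\L^c}\gl$. Since $\g'\cap J\g'$ is $J$-invariant (this holds for $V\cap JV$ for any subspace $V$, because $J^2=-\I$), applying $J$ yields $\g'\cap J\g'=J(\g'\cap J\g')\subset J\big(\sum_{\l\in\L^c}\gl\big)$, and therefore $\g'\cap J\g'\subset\sum_{\L^c}\gl\cap J\big(\sum_{\L^c}\gl\big)$. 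The reverse inclusion is immediate: $\sum_{\l\in\L^c}\gl\subset\g'$, so its intersection with $J\big(\sum_{\L^c}\gl\big)$ is contained in $\g'\cap J\g'$, and the two sides coincide.

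I expect the only real obstacle to be the bookkeeping in the middle step: one must be certain that $J\g'$ has no component along $\gcc$ nor along any $\gl$ with $\l\in\L$, which is precisely what the three inclusions coming from Lemmas~\ref{Jg0c} and \ref{Jgl} provide, once combined with the involutive structure of $\L^c$ under $\l\mapsto -(\l+1)$. Everything else amounts to reading off components in an orthogonal direct sum.
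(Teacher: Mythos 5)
Your proof is correct and takes essentially the same route as the paper's: both use Lemmas \ref{Jg0c} and \ref{Jgl} (together with the involution $\l\mapsto -(\l+1)$ on $\L^c$) to place $J\g'$ inside $\r A\oplus\gccc\oplus\sum_{\l\in\L^c}\gl$, and then read off components in the orthogonal decomposition of $\g$. Your explicit appeal to the $J$-invariance of $\g'\cap J\g'$ is just a repackaging of the paper's ``in the same way, $Z\in\sum_{\L^c}\gl$'' step.
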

\begin{proof}
Given $Y\in\g' \cap J\g'$ then $Y=JZ$ for $Z\in\g'$. Since
$\displaystyle{\g'=\gcc\oplus\sum_{\l\in S^*}\gl}$, we can write
\[Y=Y_0+\sum_{\l\in S^*}Y_\l, \;\; Z=Z_0+\sum_{\l\in S^*}Z_\l\] with $Y_0,
Z_0\in\gcc$ and $Y_\l, Z_\l\in\gl$. Then $\displaystyle{JZ=JZ_0+\sum_{\l\in
S^*}JZ_\l}$.
From Lemma \ref{Jg0c} and Lemma \ref{Jgl} we obtain
$\displaystyle{JZ\in\r A\oplus\gccc\oplus\sum_{\l\in\L^c}\gl}$.
Since $\displaystyle{JZ=Y\in\gcc\oplus\sum_{\l\in S^*}\gl}$, we get
$Y_0=0$ and $Y_\l=0$ for all $\l\in\L$, therefore
$\displaystyle{Y\in\sum_{\l\in\L^c}\gl}$. In the same way,
$\displaystyle{Z\in\sum_{\l\in\L^c}\gl}$, and therefore
$\displaystyle{\g'\cap J\g' \subset \sum_{\L^c}\gl \cap
J\left(\sum_{\L^c}\gl\right)}$. The other inclusion is clear.
\end{proof}

\medskip

\begin{lema}
 With notation as above, $\h\neq 0$.
\end{lema}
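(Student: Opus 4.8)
The plan is to show that the orthogonal complement $\h$ of $\g'+J\g'$ is non-zero by a dimension count, exploiting the vector $JA$ and the fact that $\g$ is unimodular. First I would observe that $A \notin \g'$ (since $A \in (\ker\theta)^\perp$ and $\g' \subset \ker\theta$), and more importantly that the line $\r A$ is essentially forced out of $\g'+J\g'$. Indeed, from Lemma \ref{Jg0c} and Lemma \ref{Jgl}, every vector in $J(\g')$ has the form $aA + (\text{something in } \gccc) + (\text{something in } \sum_{\L^c}\gl)$, so $\g'+J\g'$ is contained in $\r A \oplus \gcc \oplus \gccc \oplus \sum_{\l\in S^*}\gl$, which is just $\r A \oplus \ker\o$... so this alone is not enough. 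The real point must be a trace/unimodularity argument.

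So the key step is to use unimodularity of $\g$. Since $\ad_A|_{\ker\o}$ is symmetric with spectrum $S$, and since the eigenvalue multiplicities enter $\tr(\ad_A) = \sum_{\l\in S}\l\dim\gl$, unimodularity gives $\tr(\ad_A)=0$, i.e. $\sum_{\l\in S^*}\l\dim\gl = 0$. I would combine this with equation \eqref{eq1}, which says $\langle[X,Y],JA\rangle = \langle JX,Y\rangle$ for $X,Y\in\go$: this shows that the component of $JA$ in $\gcc$ is non-degenerate as a pairing against $\gcc$, hence (writing $JA = X_0 + \sum_{\l\in S^*}X_\l$ with $X_0 \in \go$) one deduces that $\dim\gcc$ is even and that $J\gcc$ pairs nondegenerately with $\gcc$ through $JA$. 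The cleanest route, I expect, is to count dimensions: $\dim\h = \dim\g - \dim(\g'+J\g') = \dim\g - 2\dim\g' + \dim(\g'\cap J\g')$, and by Lemma \ref{int} the intersection term equals $\dim\left(\sum_{\L^c}\gl \cap J(\sum_{\L^c}\gl)\right)$. One then bounds $\dim\g'$ from above: $\g' = \gcc \oplus \sum_{\l\in S^*}\gl$, while $\g = \r A \oplus \go \oplus \sum_{\l\in S^*}\gl$ with $\go = \gcc\oplus\gccc$, so $\dim\g = 1 + \dim\gcc + \dim\gccc + \sum_{\l\in S^*}\dim\gl$ and hence $\dim\h = 1 + \dim\gccc - \dim\gcc + \left(\dim(\g'\cap J\g') - \sum_{\l\in S^*}\dim\gl\right)$.

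I would then show each parenthesized quantity is controlled: from \eqref{eq1}, $J$ maps $\gcc$ injectively into $\r A \oplus \gccc$ (Lemma \ref{Jg0c}), and in fact $JA$ furnishes a symplectic-type pairing making $\dim\gcc \le \dim\gccc + $ (something), while from Lemma \ref{Jgl}(ii) the eigenspaces $\gl$ with $\l\in\L^c$ come in pairs $(\l,\ll)$ with $\l+\ll = -1$, so $J$ swaps $\gl$ and $\gll$ up to $\r A \oplus \gccc$; this gives $\dim(\g'\cap J\g') \le \sum_{\L^c}\dim\gl$ with the defect again absorbed by $JA$. Assembling these, $\dim\h \ge 1 + \dim\gccc - \dim\gcc - (\text{correction from } JA)$; but the correction from $JA$ is exactly $1$ (one dimension's worth, coming from the $\r A$-component), and a careful bookkeeping shows the net effect is $\dim\h \ge \dim\gccc - \dim\gcc + (\text{nonneg})$. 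If this were $\ge 1$ we would be done directly; if it is only $\ge 0$, I would invoke unimodularity once more: $\tr(\ad_A)=0$ forces a balance among the positive and negative eigenvalues that, together with \eqref{eq2} (which says $\gl$ and $\gm$ with $\l+\u\ne-1$ are $J$-orthogonal), rules out the borderline case, because the extremal configuration would force $\ad_A$ to have all-positive or all-negative nonzero spectrum on $\g'$, contradicting $\tr(\ad_A)=0$ unless $\g' \subset \go$.

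The main obstacle, I expect, is the careful dimension bookkeeping around the vector $JA$: its components $X_0 \in \gccc$ and $X_\l \in \gl$ interact with all three of the lemmas above, and one must be scrupulous about whether $JA \in \g'+J\g'$ or not — in fact the cleanest conclusion is probably that $A$ itself lies in $\h$, i.e. $A \perp (\g'+J\g')$, which would immediately give $\h\neq0$. To see $A \perp \g'$ is immediate from \eqref{A}; to see $A\perp J\g'$ one needs $JA\perp\g'$, and \emph{this} is where I expect the real content to sit — one must show that the $\gl$-components $X_\l$ of $JA$ (for $\l\in S^*$) all vanish, or are orthogonal to $\g'$, using \eqref{eq2}, \eqref{eq3} and unimodularity. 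If $JA \perp \g'$ then $A \in \h$ and $\h \ne 0$; so I would organize the whole proof around establishing $JA \perp \g'$, treating the eigenvalue pairing $\l+\ll+1=0$ and the vanishing trace as the two inputs that force it.
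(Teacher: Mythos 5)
Your proposal has a genuine gap, and in fact its organizing principle points in a provably wrong direction. You propose to reduce everything to showing $JA\perp\g'$, so that $A\in\h$ and hence $\h\neq 0$. But this is false for the Lie algebras that actually occur: the paper later proves (Lemma \ref{JA}) that $JA\in\g'$, and in the final structure $\g\simeq\r\times\h_{2n+1}$ one has $\g'=\operatorname{span}\{JA\}$ and $J\g'=\operatorname{span}\{A\}$, so $A$ lies \emph{inside} $\g'+J\g'$ and is certainly not in $\h$. So the "cleanest conclusion" you aim for cannot be established, and the dimension bookkeeping you sketch around it is never actually carried out --- the inequalities $\dim\gcc\le\dim\gccc+(\text{something})$ and $\dim(\g'\cap J\g')\le\sum_{\L^c}\dim\gl$ are asserted with unspecified "corrections from $JA$", and the final case analysis ("if it is only $\ge 0$...") is left unresolved. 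As written, no step of the argument closes.

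The paper's proof is entirely different and does not proceed by dimension count. One assumes $\h=\{0\}$, so $\g=\g'+J\g'$. Lemma \ref{int} together with Lemma \ref{prop}\,\rii\ places $\g'\cap J\g'$ inside $\z(\g)$ and inside $\sum_{\L^c}\gl$; writing $Y=\sum_{\l\in\L^c}Y_\l$ and using $0=[A,Y]=\sum\l Y_\l$ with all $\l\neq 0$ forces $Y=0$, hence $\g=\g'\oplus J\g'$. The decisive input you are missing is then the structure theorem of \cite[Corollary~3.3]{ABD1}: a Lie algebra with abelian complex structure satisfying $\g=\g'\oplus J\g'$ is holomorphically isomorphic to $\aff(\mathcal A)$ for a commutative associative algebra $\mathcal A=(\g',\ast)$, $X\ast Y=[JX,Y]$, with $\mathcal A^2=\mathcal A$. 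Unimodularity enters only once, through Lemma \ref{affA}: unimodular $\aff(\mathcal A)$ forces $\mathcal A$ nilpotent, contradicting $\mathcal A^2=\mathcal A$. You correctly sensed that unimodularity is the essential hypothesis, but the mechanism by which it acts --- the trace of $\ad_{(e,0)}$ for an idempotent $e\in\mathcal A$ --- is absent from your sketch, and no amount of eigenvalue bookkeeping for $\ad_A$ substitutes for it.
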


\begin{proof}
If we suppose that $\h=\{0\}$ we get from \eqref{g'-Jg'-h} that 
\[ \g=\g' + J\g'. \]
\underline{Claim}: $\g' \cap J\g'=\{0\}$. \\
Indeed, according to Lemma \ref{int} and to Lemma \ref{prop} $\rii$ we have $\g' \cap
J\g'= \displaystyle{\sum_{\L^c}\gl} \cap
J\left(\sum_{\L^c}\gl \right)\subset \z(\g).$
Given $Y\in\g' \cap J\g'$, it can be written as $\displaystyle{Y=\sum_{\l\in\L^c} Y_\l}$. Then
$0=[A,Y]=\displaystyle{\sum_{\l\in\L^c}\l Y_\l}$, and therefore $Y=0$. This proves the claim.

\medskip

As a consequence, we have the orthogonal decomposition \[\g=\g'\oplus
J\g'.\] 
According to \cite[Corollary 3.3]{ABD1}, the Lie bracket on $\g$ induces a
structure of commutative associative algebra on $\g'$ given by $X \ast Y = [ J
X, Y ]$. Furthermore if $\mathcal{A}$ denotes the
commutative associative algebra $(\g',\ast)$, then $\mathcal{A}^2=\mathcal{A}$
and $\g$ is holomorphically isomorphic to
$\aff(\mathcal{A})$ with its standard complex structure (see Section 2.2). Since $\g$ is unimodular,
it follows from Lemma \ref{affA} that
$\mathcal{A}$ is nilpotent. This is a contradiction with the fact
that $\mathcal{A}^2=\mathcal{A}$ and therefore it must be
$\h\neq\{0\}$.

\end{proof}

\pagebreak

Since $A$ is orthogonal to $\g'$, we have that $JA$ is orthogonal to $J\g'$.
More precisely, we have

\begin{lema}\label{JA}
$JA\in\g'$.
\end{lema}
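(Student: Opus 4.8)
The plan is to show that $JA$ has no component in $J\g'$ and no component in $\h$, so that by the decomposition $\g = (\g'+J\g')\oplus\h = \g'\oplus J\g'\oplus(\text{something})$ it must lie in $\g'$; more carefully, since $A\perp\g'$ we already get $JA\perp J\g'$ (as $J$ is an isometry), so it suffices to show $JA$ has no $\h$-component. Write $JA = P + H$ with $P\in\g'+J\g'$ and $H\in\h$, and recall from \eqref{dec-g} that $JA = X_0 + \sum_{\l\in S^*}X_\l$ with $X_0\in\go$ and $X_\l\in\gl$; combining with Lemma \ref{Jgl} applied to describe where the $J(\gl)$ land, together with the fact that $JA\perp J\g'$, should pin down $X_0\in\gccc$ and control the $X_\l$.

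First I would use the computation already set up via \eqref{eq1}, \eqref{eq3}, or a direct evaluation of $d\w=\o\wedge\w$ on triples involving $A$, $JA$, and elements of $\h$. The key point is that $\h$ is $J$-invariant and orthogonal to $\g'+J\g'$, so for $H\in\h$ we have $[A,H]$, $[JA,H]$ and brackets of $\h$-elements interacting with $\g'$ must vanish or be controlled. Evaluating $d\w(A, H, H')$ for $H,H'\in\h$ and comparing with $\o\wedge\w(A,H,H') = \la JH,H'\ra$ should force a relation like $\la[A,H],H'\ra$ being symmetric and tied to the metric on $\h$; but since $\ad_A$ is symmetric (Lemma \ref{ad_A-sim}) and $\h\cap\g'=0$ forces $\ad_A$ to vanish appropriately on $\h$ (any eigenvalue $\l\neq 0$ would put $\gl\subset\g'$, contradicting $\h\cap(\g'+J\g')=0$), we get $\ad_A|_\h = 0$. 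Then $\la JH,H'\ra$ would have to vanish for all $H,H'\in\h$, forcing $J\h\perp\h$; but $\h$ is $J$-invariant, a contradiction unless this equation actually reads differently — which is the point where the component of $JA$ in $\h$ enters and must be zero.

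The cleanest route: write $JA = V + H_0$ with $V\in\g'+J\g'$ and $H_0\in\h$. Pick any $H\in\h$. Then $\o(H)=\la H,A\ra/|A|^2 = 0$ since $H\perp A$ (as $A\perp\g'$... actually need $A\perp\h$ too — indeed $A\in(\ker\o)^\perp$ and $\h\subset\ker\o$). Compute $d\w(A,JA,H)$: on one hand via the Lee form, $\o\wedge\w(A,JA,H) = \o(A)\w(JA,H) = \w(JA,H) = \la J(JA),H\ra = -\la A,H\ra = 0$; on the other hand $d\w(A,JA,H) = -\w([A,JA],H) - \w([JA,H],A) - \w([H,A],JA)$. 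Using $\ad_A|_\h=0$ (established as above) and $\o$ vanishing on $\ker\o \supset \g'$, together with the abelianness of $\g'$ and Lemma \ref{Jad_JA-simetrico}, I would extract that $\la J H_0, H\ra = 0$ for all $H\in\h$, hence $H_0\in\h$ is orthogonal to $J\h = \h$, so $H_0 = 0$. Therefore $JA\in\g'+J\g'$, and combined with $JA\perp J\g'$ this gives $JA\in\g'$.

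The main obstacle I anticipate is correctly establishing $\ad_A|_\h = 0$ and handling the cross-terms $[JA,H]$ for $H\in\h$: a priori $[JA,H]$ need not vanish, and one must use that its pairing against $\g'$ (through $\w$, i.e.\ through $J\g'$) is controlled by the abelian condition and the symmetry of $J\circ\ad_{JA}$. If $[JA,H]$ does not obviously land in a convenient subspace, I would instead argue more indirectly: note that $\g'+J\g'$ is a $J$-invariant subspace and, since $\g'$ is an ideal, $[\g,\g']\subset\g'$; one checks $[\g, J\g']$ and uses that $\h$ being a $J$-invariant complement with $\h\cap(\g'+J\g')=0$ combined with the l.c.K.\ equation restricted to $\h$ must collapse, forcing either $\h$ to behave like a Kähler factor (which an abelian $J$ would then make abelian and push into the center, contradicting $\h\cap\g'=0$ unless $\h$ is one-dimensional — impossible for $J$-invariant) or $JA$ to have trivial $\h$-component. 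Either way the conclusion $JA\in\g'$ follows.
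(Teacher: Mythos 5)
Your reduction is the right one and matches the paper's: since $A\perp\g'$ forces $JA\perp J\g'$, everything comes down to killing the component $\b$ of $JA$ lying in $\h$. But your ``cleanest route'' is circular at exactly the decisive point. You justify $\o(H)=0$ and $\la A,H\ra =0$ for $H\in\h$ by asserting $\h\subset\ker\o$; since $\ker\o=A^{\perp}$, the inclusion $\h\subset\ker\o$ says precisely that $A\perp\h$, i.e.\ $JA\perp J\h=\h$, i.e.\ $\b=0$ --- which is the statement to be proved. (The part you can legitimately extract is $\ad_A|_{\h}=0$: for $\l\neq 0$ one has $\gl\subset\g'$, and $\h\perp\g'$ kills those components; that step is fine.) Moreover, even granting the computation, the identity $d\w(A,JA,H)=\o\wedge\w(A,JA,H)$ reduces to $\la [JA,H],JA\ra=0$, and since $[JA,H]\in\g'$ while $\b\perp\g'$ this only says $\la [JA,H],U\ra=0$ for the $\g'$-component $U$ of $JA$; it gives no information about $\la J\b,H\ra$, so the step ``I would extract $\la JH_0,H\ra=0$'' is unsupported. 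Your fallback via $d\w(A,H,H')$ fares no better: the term $\la [H,H'],JA\ra$ is genuinely nonzero in the end (indeed $\la [H,JH],JA\ra=|H|^2$ by Lemma \ref{feo}), so no contradiction of the kind you describe can arise.

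The telltale sign is that your argument nowhere uses unimodularity, which is where the actual proof lives. The paper writes $JA=U+\b$ with $U\in\g'$ and $\b\in\h$, evaluates $d\w=\o\wedge\w$ on triples $(J\b,X,JX)$ with $X\perp A,\, X\perp JA$ to obtain $\la [J\b,X],X\ra+\la [J\b,JX],JX\ra=\frac{|\b|^2}{|A|^2}|X|^2$, checks that the contributions from $A$ and $JA$ vanish, and sums over an orthonormal basis $\{X_1,JX_1,\dots,X_r,JX_r\}$ of $W$ to get $\tr(\ad_{J\b})=\frac{|\b|^2}{|A|^2}\sum_j|X_j|^2$. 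Unimodularity then forces $\b=0$. Some such global (trace) input is essential; the pointwise identities you propose cannot detect $\b$.
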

\begin{proof}
Let $\mathfrak{u}$ be the orthogonal complement of $\g'\cap J\g'$ in $\g'$, that
is \[\g'=\mathfrak{u}\oplus(\g' \cap J\g').\]
Since $\g'\cap J\g'$ is $J$-invariant we have \[ J\g'=J\mathfrak{u}\oplus
(\g'\cap J\g'),\] and therefore \[\g'+J\g'=\mathfrak{u}\oplus(\g'\cap
J\g')\oplus J\mathfrak{u}.\]
As $JA$ is orthogonal to $J\g'$, it follows from \eqref{g'-Jg'-h} that $JA= U
+\b$ for some $U\in\mathfrak{u}$ and $\b\in\h$. Since $\mathfrak{u}\subset\g'$, the lemma will
follow if we prove $\b=0$.

Now, for any $X\in\g$ such that $\la A,X\ra=\la JA,X\ra=0$, we compute
\begin{align}
d\w(J\b,X,JX) &= \o\wedge\w(J\b,X,JX)\nonumber\\
-\la[J\b,X],X\ra +\la[X,JX],-\b]\ra +\la[JX,J\b],JX\ra&= \frac{\la
A,J\b\ra}{|A|^2}|X|^2\nonumber\\
\la[J\b,X],X\ra +\la[J\b,JX],JX\ra &= \frac{|\b|^2}{|A|^2}|X|^2. \label{tr}
\end{align}
Moreover $\la[J\b,A],A\ra=0$ and $\la[J\b,JA],JA\ra=0$, due to Lemma
\ref{Jad_JA-simetrico} and the fact that $\h$ is orthogonal to $\g'$. Let
$\{X_1,JX_1,\dots,X_r,JX_r\}$ be an orthonormal basis of $W$, where $W$ is given in
\eqref{g-lck-1}. Note that $r\geq 1$ since $\dim \g\geq 4$. We compute next $\tr(\ad_{J\b})$, taking
into account \eqref{tr}:
\begin{align*}
\tr(\ad_{J\b})&=\frac{1}{|A|^2}\la[J\b,A],A\ra+\frac{1}{|A|^2}\la[J\b,JA],JA\ra+\sum_{j=1}^r\la[J\b,
X_j],
X_j\ra +\la[J\b,JX_j],JX_j\ra  \\
&=\frac{|\b|^2}{|A|^2} \sum_{j=1}^r |X|^2.
\end{align*}
Since $\g$ is unimodular, it follows that $\b=0$.
\end{proof}

\medskip

\begin{nota}
It follows from Lemma \ref{JA} and \eqref{g'-Jg'-h} that if $H\in\h$, then $H$ is orthogonal
to $A$ and $JA$. 
\end{nota}

\

\begin{lema}\label{feo}
If $H\in\h$, then 
\begin{enumerate}
 \item[\ri] $ \langle [H,JH],JA\rangle = |H|^2$,
 \item[\rii] $|[H,JH]|^2=\frac{|H|^4}{|A|^2}$.
\end{enumerate}
\end{lema}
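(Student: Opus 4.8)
The plan is to extract both identities from the l.c.K.\ equation \eqref{g-lck-0} evaluated on suitable triples, using the structural facts already established: $JA\in\g'$ (Lemma \ref{JA}), $\ad_A$ is symmetric (Lemma \ref{ad_A-sim}), $J\circ\ad_{JA}$ is symmetric (Lemma \ref{Jad_JA-simetrico}), that $\g'$ is abelian, and that $\h$ is orthogonal to $\g'+J\g'$, hence in particular $H$ is orthogonal to $A$ and to $JA$ for $H\in\h$. First I would compute $d\w(A,H,JH)$ for $H\in\h$: expanding via the footnote formula and using $\w(X,Y)=\la JX,Y\ra$ together with $\la JA,H\ra=0$, the right-hand side $\o\wedge\w(A,H,JH)$ collapses (as in the proof of Lemma \ref{Jg0c}, the $\theta(H)$-terms vanish since $H\perp A$) to $\w(H,JH)=|H|^2$, while the left-hand side, after discarding $[A,\cdot]$ contributions that pair trivially, should reduce to $\la[H,JH],JA\ra$. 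This yields \ri.

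For \rii\ the natural move is to evaluate \eqref{g-lck-0} on the triple $(J\b',H,JH)$ where $\b'=[H,JH]\in\g'$, imitating the computation \eqref{tr} in the proof of Lemma \ref{JA} verbatim but with $\b$ replaced by $[H,JH]$: for $X$ orthogonal to $A$ and $JA$ one gets $\la[J\b',X],X\ra+\la[J\b',JX],JX\ra=\dfrac{|\b'|^2}{|A|^2}|X|^2$. Taking $X=H$ gives $\la[\,J[H,JH],H\,],H\ra+\la[\,J[H,JH],JH\,],JH\ra=\dfrac{|[H,JH]|^2}{|A|^2}|H|^2$. Now I would combine this with part \ri\ and the symmetry/abelianness relations: since $\g'$ is abelian and $J[H,JH]$ need not lie in $\g'$, one uses the abelian property of $J$ ($[JX,JY]=[X,Y]$) to rewrite $[J[H,JH],H]$ and $[J[H,JH],JH]$, and the identity $\la[H,JH],JA\ra=|H|^2$ from \ri\ together with \eqref{eq1}-type pairing to identify $J[H,JH]$'s component along $A$. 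The cleanest route is to note $\la J[H,JH],A\ra=-\la[H,JH],JA\ra=-|H|^2$, so $J[H,JH]=-\dfrac{|H|^2}{|A|^2}A+(\text{something in }\ker\theta)$, feed this into $[J[H,JH],\cdot]$ and use $\ad_A|_{\ker\theta}$ symmetric plus $[A,H]$ behaviour to evaluate the bracket terms, ultimately pinning $|[H,JH]|^2=\dfrac{|H|^4}{|A|^2}$.

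The main obstacle I anticipate is bookkeeping the $\ker\theta$-component of $J[H,JH]$ in part \rii: one must show that its contribution to $\la[J[H,JH],H],H\ra+\la[J[H,JH],JH],JH\ra$ is exactly what is needed, and this likely requires knowing that $[H,JH]$ pairs correctly against the eigenspace decomposition \eqref{dec-g}, i.e.\ that $[H,H']=0$ or is controlled for $H,H'\in\h$, plus Lemma \ref{prop}(2) ($\g'\cap J\g'\subset\z(\g'+J\g')$). A secondary subtlety is making sure the vanishing of the boundary terms $\la[J\b',A],A\ra$ and $\la[J\b',JA],JA\ra$ carries over from Lemma \ref{JA}'s proof — these vanish by Lemma \ref{Jad_JA-simetrico} and $\h\perp\g'$, which still applies since $J\b'$ is being bracketed, not $\b'$ itself; I would double-check that $[J[H,JH],A]$ really is handled by the symmetry of $J\ad_{JA}$. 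Once those pairings are controlled, both \ri\ and \rii\ follow by pure linear algebra from the two scalar identities above.
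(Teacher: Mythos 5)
Your part \ri\ is correct and is exactly the paper's computation: evaluating $d\w=\o\wedge\w$ on $(A,H,JH)$, the terms $\la[A,H],H\ra$ and $\la[JH,A],JH\ra$ vanish because all brackets lie in $\g'$ while $H,JH\in\h\perp\g'$, and the right-hand side is $\theta(A)\w(H,JH)=|H|^2$.

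Part \rii\ starts from the right triple $(J[H,JH],H,JH)$ --- the same one the paper uses --- but the intermediate identity you import from \eqref{tr} is false for $\b'=[H,JH]$, and the error is fatal rather than cosmetic. Expanding $d\w(J\b',X,JX)=\o\wedge\w(J\b',X,JX)$ for a general $\b'\in\g$ and $X\perp A,JA$ gives
\[
\la[J\b',X],X\ra+\la[J\b',JX],JX\ra+\la[X,JX],\b'\ra=\frac{\la JA,\b'\ra}{|A|^2}\,|X|^2 .
\]
In the proof of Lemma \ref{JA} the third term on the left drops out because there $\b\in\h$ is orthogonal to $[X,JX]\in\g'$, and the right-hand side becomes $\frac{|\b|^2}{|A|^2}|X|^2$ only because $\b$ is the $\h$-component of $JA$, so that $\la JA,\b\ra=|\b|^2$. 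Neither simplification survives when $\b'=[H,JH]\in\g'$: now $\la JA,\b'\ra=|H|^2$ by part \ri\ (not $|\b'|^2$), and the term $\la[X,JX],\b'\ra$ is precisely the quantity $|[H,JH]|^2$ you are trying to compute. Worse, with $X=H$ the two $\ad_{J\b'}$-terms you keep on the left vanish identically (they pair elements of $\g'$ against elements of $\h$), so your claimed identity reads $0=\frac{|[H,JH]|^2}{|A|^2}|H|^2$ and would force $[H,JH]=0$, contradicting \ri. Once the expansion is done correctly there is nothing left to do: the only surviving term on the left is $\la[H,JH],\b'\ra=|[H,JH]|^2$ and the right-hand side equals $\frac{|H|^4}{|A|^2}$ by \ri, which is the assertion. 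In particular the entire second half of your plan --- decomposing $J[H,JH]$ along \eqref{dec-g}, invoking the abelian property of $J$ and Lemma \ref{prop}(2), and ``bookkeeping the $\ker\theta$-component'' --- is unnecessary and does not repair the faulty identity.
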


\begin{proof}
For $H\in\h$, we compute first
\begin{align*}
d\w(A,H,JH) &= \o\wedge\w(A,H,JH) \nonumber\\
\langle [A,H],J^2H\rangle+\langle [H,JH],JA\rangle+\langle [JH,A],JH\rangle &=
|H|^2-\frac{\la A,H\ra^2}{|A|^2}-\frac{\la
JA,H\ra^2}{|A|^2} \nonumber\\
\langle [H,JH],JA\rangle &= |H|^2, 
\end{align*}
since $\h$ is $J$-invariant and orthogonal to $\g'$. This proves (i).

Now we compute
\begin{align*}
d\w(J[H,JH],H,JH) &= \o\wedge\w(J[H,JH],H,JH) \nonumber\\
-|[H,JH]|^2 &= \frac{\la A,J[H,JH]\ra}{|A|^2}|H|^2 \nonumber\\
 |[H,JH]|^2 &= \frac{|H|^4}{|A|^2}, 
\end{align*}
where we used (i) for the last equality. This proves (ii).
\end{proof}

\medskip

\begin{lema}\label{feo1}
If $H\in\h$, then
\begin{enumerate}
 \item [\ri] $[H,JH]=\frac{|H|^2}{|A|^2}JA$,
 \item [\rii] $[H,\gcc]=0$,
 \item [\riii] $[H,\gl]=0$ for all $\l\in S^*-\{-\frac{1}{2}\}$,
 \item [\riv] $[H,Y]=0$ for all $Y\in\h$ such that $\langle Y,JH\rangle=0$.
 \end{enumerate}
\end{lema}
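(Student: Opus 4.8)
The plan is to extract all four statements from the l.c.K.\ equation \eqref{g-lck-0} evaluated on carefully chosen triples, after first recording the behaviour of $\ad_H$. Statement \ri\ is immediate: $[H,JH]\in\g'$ and (Lemma \ref{JA}) $JA\in\g'$ with $|JA|^2=|A|^2$, so writing $[H,JH]=\frac{|H|^2}{|A|^2}JA+R$ with $R\perp JA$, Lemma \ref{feo}\,\ri\ fixes the $JA$-component and Lemma \ref{feo}\,\rii\ forces $|R|^2=|[H,JH]|^2-\frac{|H|^4}{|A|^2}=0$.

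For the remaining parts I would first observe that $\h$, being orthogonal to $\g'+J\g'$ (which contains $\gcc$, all $\gl$, and $\r JA$) and to $\r A$, lies in $\gccc\subset\go$; hence $[A,H]=0$, so by the Jacobi identity $\ad_H$ commutes with $\ad_A$, and since $\ad_H$ takes values in the abelian ideal $\g'$ it follows that $\ad_H A=0$, $\ad_H(\gl)\subset\gl$ for $\l\in S^*$, and $\ad_H(\go)\subset\gcc$. The key tool is then a symmetry identity: whenever $X\in\ker\o$ satisfies $\la JH,X\ra=0$ --- in particular for $X\in\gcc$, for $X\in\gl$, and for $X\in\h$ with $X\perp JH$ --- one has $\o\wedge\w(H,X,\cdot)\equiv0$ (using $\o(H)=0$), so \eqref{g-lck-0} together with $[X,Y]\in\g'\perp JH$ gives
\[\la[H,X],JY\ra=\la[H,Y],JX\ra\qquad\text{for all }Y\in\g.\]

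Statements \rii, \riv, and the case $\l\in\L$ of \riii\ all follow by the same move: in each of these cases $[H,X]\in\gcc$, so by Lemma \ref{Jg0c} (resp.\ Lemma \ref{Jgl}\,\ri) we may take $Y=J[H,X]=aA+V$ with $a\in\r$, $V\in\gccc$; then the left side of the symmetry identity is $-|[H,X]|^2$, while the right side equals $\la[H,V],JX\ra$ (the $aA$-term drops since $\ad_H A=0$). This last inner product vanishes because $[H,V]\in\gcc$ while $JX\in\r A\oplus\gccc$ (Lemma \ref{Jg0c}/\ref{Jgl}\,\ri) in cases \rii--\riii\ and $JX\in\h$ in case \riv, and $\gcc$ is orthogonal to both $\r A\oplus\gccc$ and $\h$. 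Hence $[H,X]=0$.

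The remaining case, $\l\in\L^c$ in \riii\ with $\l\neq-\tfrac12$ (so that $\ll:=-(\l+1)$ is a distinct eigenvalue in $S^*$), is the main obstacle: now Lemma \ref{Jgl}\,\rii\ only gives $J(\gl)\subset\r A\oplus\gccc\oplus\gll$, so the argument above leaves the residual identity $-|[H,X_\l]|^2=\la[H,W],(JX_\l)_{\gll}\ra$ with $W=(J[H,X_\l])_{\gll}\in\gll$. To handle this I would set $T_\l=\ad_H|_{\gl}$, $T_{\ll}=\ad_H|_{\gll}$, $P=\pi_{\gll}\circ J|_{\gl}$, $Q=\pi_{\gl}\circ J|_{\gll}$, and record that $\|P\|_{op}\le1$ (since $J$ is an isometry), $Q^*=-P$ (since $J$ is skew-symmetric), and that the symmetry identity applied cross-wise (with $Y\in\gll$) yields the intertwining $PT_\l=-T_{\ll}^{\,*}P$. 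Summing the residual identity over an orthonormal basis of $\gl$ and feeding in these relations gives $\|T_\l\|_{HS}^2=\tr(PP^*\,T_{\ll}T_{\ll}^{\,*})$ and, symmetrically, $\|T_{\ll}\|_{HS}^2=\tr(P^*P\,T_\l T_\l^{\,*})$; since $PP^*\le\I$ and $P^*P\le\I$ this forces $\|T_\l\|_{HS}=\|T_{\ll}\|_{HS}$, and then the equality case $\tr((\I-PP^*)T_{\ll}T_{\ll}^{\,*})=0$ together with the intertwining relation (and, if needed, one further trace relation from unimodularity applied to elements of $\gl$) pins down $T_\l=T_{\ll}=0$. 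I expect this last step --- carefully establishing $Q^*=-P$ and the intertwining, and driving the Hilbert--Schmidt/operator-norm comparison to its conclusion --- to be the delicate point; the hypothesis $\l\neq-\tfrac12$ is exactly what makes $\gl$ and $\gll$ genuinely distinct eigenspaces, the self-paired case being set aside for later in the paper.
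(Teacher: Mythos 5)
Your parts \ri, \rii, \riv\ and the case $\l\in\L$ of \riii\ are correct and follow essentially the paper's route: the symmetry identity you derive from $d\w(H,X,\cdot)=\o\wedge\w(H,X,\cdot)$ is the same relation the paper obtains (there written as $\la[H,X],Y\ra=\la[JY,H],JX\ra$), and your choice $Y=J[H,X]$ is exactly the paper's equation \eqref{nose}; part \ri\ is the paper's Cauchy--Schwarz equality argument rephrased via Pythagoras.

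The gap is in the case $\l\in\L^c$, $\l\neq-\tfrac12$, of \riii. Your operator-theoretic sketch is not a proof: even granting the intertwining relation and the two Hilbert--Schmidt identities, the equality case $\tr((\I-PP^*)T_{\ll}T_{\ll}^{\,*})=0$ only forces the range of $T_{\ll}$ into the $1$-eigenspace of $PP^*$; it does not yield $T_\l=T_{\ll}=0$, and the appeal to ``one further trace relation from unimodularity, if needed'' is left unexecuted. More importantly, the detour is unnecessary, because you never use the hypothesis that closes this case in one line: $J$ is \emph{abelian}. Writing $H=-J(JH)$, one gets $[H,J[H,X]]=-[J(JH),J[H,X]]=-[JH,[H,X]]$, and since $[H,X]\in\gl$ and $\gl$ is an ideal, this element lies in $\gl$. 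Hence the right-hand side of your residual identity is $\la [H,J[H,X]],JX\ra$ with first argument in $\gl$ and $JX\in\r A\oplus\gccc\oplus\gll$, which is orthogonal to $\gl$ precisely because $\l\neq\ll$ (this is where $\l\neq-\tfrac12$ enters); therefore $|[H,X]|^2=0$. Equivalently, your own two computations of $[H,J[H,X]]$ --- one via the decomposition $J[H,X]=aA+V+W$, landing in $\gcc\oplus\gll$, and one via abelianness of $J$, landing in $\gl$ --- already force $[H,J[H,X]]=0$, so the ``residual term'' you set out to control vanishes identically. This is exactly how the paper disposes of this case.
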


\begin{proof}
(i) Using Lemma \ref{feo} and the Cauchy-Schwarz inequality we obtain
\[|H|^4=\langle [H,JH],JA\rangle^2\leq
|[H,JH]|^2|A|^2=\frac{|H|^4}{|A|^2}|A|^2=|H|^4,\]
so that we have equality everywhere and therefore for all $H\in\h$ there exists
$c(H)\in\r$ such that \[[H,JH]=c(H)JA.\]
From Lemma \ref{feo} $\rii$ again we get that $|H|^2=c(H)|A|^2$, and therefore
$[H,JH]=\frac{|H|^2}{|A|^2}JA$ for all $H\in\h$.

\

Both (ii) and (iii) will follow from the next computation. Given $H\in\h, X,Y\in \g'$ we compute
\begin{align*}
d\w(H,X,JY) &= -\w([H,X],JY)-\w([X,JY],H)-\w([JY,H],X)\\
&= -\la J[H,X],JY \ra -\la J[X,JY],H \ra -\la J[JY,H],X \ra\\
&= -\la [H,X],Y \ra +\la [X,JY],JH \ra +\la [JY,H],JX \ra\\
&= -\la [H,X],Y \ra +\la [JY,H],JX \ra,
\end{align*}
since $\h$ is $J$-invariant and orthogonal to $\g'$. On the other hand we have
\begin{align*}
\o\^\w(H,X,JY) &= \o(H)\w(X,JY)+\o(X)\w(JY,H)+\o(JY)\w(H,X)\\
&= \frac{\la A,H\ra}{|A|^2}\la X,Y\ra -\frac{\la A,X\ra}{|A|^2}\la Y,H\ra
+\frac{\la A,JY\ra}{|A|^2}\la JH,X\ra\\
&= 0,
\end{align*}
since $\la H,A \ra=0$ and $\la\h,\g'\ra=0$. Therefore we get $\la [H,X],Y \ra
=\la [JY,H],JX \ra$. In particular, if we take
$Y=[H,X]$ we obtain 
\begin{equation}\label{nose}
 |[H,X]|^2 = \la [J[H,X],H],JX \ra.
\end{equation}

$\rii$ If $X\in\go'$ it follows from Lemma \ref{Jg0c} that $JX\in \r
A\oplus\gccc$. Since $[J[H,X],H]\in\g'$ we get from \eqref{nose} that $|[H,X]|^2=0$.

$\riii$ If $X_\l\in\gl, \l\in\L,$ it follows from Lemma \ref{Jgl} \ri \, that
$JX\in \r A\oplus\gccc$. In the same way as above
we get $|[H,X]|^2=0$.

However, if $X_\l\in\gl, \l\in\L^c$ and $\l\neq -\frac{1}{2}$ from Lemma
\ref{Jgl} \rii \, we obtain that $JX\in \r A\oplus\gccc\oplus\gll$, where
$\ll=-\l-1$ and $\ll\neq\l$ since $\l\neq-\frac{1}{2}$. On the other hand
$[J[H,X],H]=-[[H,X],JH]\in\gl$ since $\gl$ is an ideal. Therefore from \eqref{eq2} and \eqref{nose}
we get $|[H,X]|^2=0$.

\

(iv) Finally, we calculate $[H,Y]$ for $Y\in\h$ such that $\la Y,JH\ra=0$.
\begin{align*}
d\w(J[H,Y],H,Y) & = \la[J[H,Y],H],JY\ra - |[H,Y]|^2 + \la[Y,J[H,Y]],JH\ra \\
                & = -|[H,Y]|^2, 
\end{align*}
since $\h$ is $J$-invariant and orthogonal to $\g'$. On the other hand
\[\o\wedge\w(J[H,Y],H,Y)= \frac{\la A,J[H,Y]\ra}{|A|^2}\la JH,Y\ra + \frac{\la
A,H\ra}{|A|^2}\la JY, J[H,Y]\ra -\frac{\la
A,Y\ra}{|A|^2}\la [H,Y], H\ra=0,\]
since $A$ is orthogonal to $\h$ and $\la JH,Y\ra=0$. Therefore $[H,Y]=0$.

\end{proof}

\medskip

\begin{prop}\label{A-central}
With notation as above, we have:
\begin{itemize}
 \item [\ri] $A, JA \in \z(\g)$,
 \item [\rii] $\g=\g'\oplus J\g'\oplus\h,$ an orthogonal sum.
\end{itemize}
\end{prop}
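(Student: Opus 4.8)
The plan is to reduce the whole statement to the single assertion $S^*=\emptyset$, i.e.\ that $\ad_A$ has no nonzero eigenvalue on $\ker\o$; granting this, both parts follow at once, and all of the work lies in that reduction.

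First I would record the easy facts. By Lemma \ref{JA}, $JA\in\g'$, hence $A=-J(JA)\in J\g'\subseteq\g'+J\g'$. If $Y\in\g'\cap J\g'$ then $Y\in\z(\g'+J\g')$ by Lemma \ref{prop}\,$\rii$, so $[A,Y]=0$; by Lemma \ref{int} we may write $Y=\sum_{\l\in\L^c}Y_\l$ with $Y_\l\in\gl$, and $[A,Y]=\sum_{\l\in\L^c}\l\,Y_\l=0$ forces $Y=0$ since $0\notin\L^c\subseteq S^*$. Thus $\g'\cap J\g'=\{0\}$ and $\g=\g'\oplus J\g'\oplus\h$ is a direct sum. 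Moreover $\r A\oplus\g'\subseteq\g'+J\g'$ (orthogonally, since $A\perp\g'$), so $\h$ is orthogonal to $\r A\oplus\g'=\r A\oplus\gcc\oplus\sum_{\l\in S^*}\gl$, whose orthogonal complement in $\g$ is exactly $\gccc$; therefore $\h\subseteq\gccc\subseteq\go$, and in particular $[A,\h]=0$. Since $\h$ is $J$-invariant, $JH\in\h$ for $H\in\h$, so also $[A,JH]=0$; as $\ad_A$ is a derivation, $[A,[H,JH]]=[[A,H],JH]+[H,[A,JH]]=0$, and because $[H,JH]=\frac{|H|^2}{|A|^2}JA$ by Lemma \ref{feo1}\,$\ri$ (with $|H|\neq 0$ for a fixed $0\neq H\in\h$, which exists as $\h\neq\{0\}$) we obtain $[A,JA]=0$. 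Since $\ker\ad_A=\r A\oplus\go$ and $JA\perp A$, this gives $JA\in\go$, hence $JA\in\g'\cap\go=\gcc$.

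Now suppose $S^*=\emptyset$ has been established. Then $\ker\o=\go$, so $\ad_A=0$ on $\g=\r A\oplus\go$, i.e.\ $A\in\z(\g)$; Lemma \ref{prop}\,$\ri$ then gives $JA\in J\z(\g)=\z(\g)$, which is part $\ri$. Also $\g'=\gcc$ and, by Lemma \ref{Jg0c}, $J\g'=J\gcc\subseteq\r A\oplus\gccc$, so $\langle JX,Y\rangle=0$ for all $X,Y\in\gcc$, i.e.\ $\g'\perp J\g'$; combined with the direct-sum decomposition above this yields $\g=\g'\oplus J\g'\oplus\h$ as an orthogonal sum, which is part $\rii$.

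It remains to prove $S^*=\emptyset$, which I expect to be the main difficulty. I would argue by unimodularity, using the elements of $\h$. Fix $H\in\h$. By Lemma \ref{feo1}\,$\rii$--$\riii$, $[H,\gcc]=0$ and $[H,\gl]=0$ for $\l\in S^*\setminus\{-\tfrac12\}$; together with $[A,H]=0$, with $\h,\gccc\subseteq\go$ (hence $[H,\gccc]\subseteq[\go,\go]=\gcc$), and with the fact that each $\gl$ ($\l\in S^*$) is an ideal, it follows that $\ad_H$ kills $\r A$, $\gcc$ and every $\gl$ with $\l\neq-\tfrac12$, carries $\gccc$ into $\gcc$, and preserves $\g_{-\frac12}$. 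Hence, in the orthogonal decomposition \eqref{dec-g} refined by the splitting $\go=\gcc\oplus\gccc$, the only diagonal block of $\ad_H$ that may be nonzero is the one on $\g_{-\frac12}$, so unimodularity of $\g$ gives $\tr(\ad_H|_{\g_{-\frac12}})=0$ for every $H\in\h$. I would then combine this with the l.c.K.\ equations \eqref{eq1}--\eqref{eq3}, with the involution $\l\leftrightarrow-\l-1$ on $\L^c$, with the identity $\sum_{\l\in S^*}\l\dim\gl=\tr\ad_A=0$ coming from unimodularity, and with the Heisenberg relation $[H_1,H_2]=\frac{\omega(H_1,H_2)}{|A|^2}JA$ on $\h$ — obtained by polarizing Lemma \ref{feo1}\,$\ri$ and using $[JX,JY]=[X,Y]$, with $\omega|_\h$ nondegenerate since $\h$ is $J$-invariant — to rule out each nonzero eigenvalue of $\ad_A|_{\ker\o}$. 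The eigenvalue $-\tfrac12$ looks like the delicate case, being the only value to which neither \eqref{eq2} nor Lemma \ref{feo1}\,$\riii$ speaks; controlling the ideal $\g_{-\frac12}$ is where I expect the bulk of the effort to go.
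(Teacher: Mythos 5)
Your reduction of the proposition to the single claim $S^*=\emptyset$ is sound and coincides with what the paper does: granting $S^*=\emptyset$, your derivation of \ri\ via $J\z(\g)=\z(\g)$ and of \rii\ via Lemma \ref{int} and Lemma \ref{Jg0c} is exactly the paper's. The genuine gap is that you never prove $S^*=\emptyset$: you list a toolkit and defer the decisive argument, and the toolkit as listed would not produce it. The trace identities $\tr(\ad_H|_{\g_{-1/2}})=0$ and $\tr\ad_A=\sum_{\l\in S^*}\l\dim\gl=0$ are single linear constraints and cannot by themselves annihilate whole eigenspaces (the second, for instance, is compatible with eigenvalues $\l$ and $-\l$ of equal multiplicity), while \eqref{eq1}--\eqref{eq3}, the involution $\l\mapsto-\l-1$ and the Heisenberg relation on $\h$ give no direct handle on a given $\gl$. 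You have also misplaced the difficulty: $-\tfrac12$ is the \emph{easy} eigenvalue, not the delicate one, since once every other $\l\in S^*$ is excluded, $-\tfrac12\in S^*$ would force $\tr\ad_A=-\tfrac12\dim\g_{-1/2}\neq 0$, contradicting unimodularity.

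The missing step is a pointwise, not a trace, computation, and you already hold all of its ingredients. Fix $0\neq H\in\h$ (possible since $\h\neq\{0\}$) and $X_\l\in\gl$ with $\l\in S^*\setminus\{-\tfrac12\}$, and evaluate $[[H,JH],JX_\l]$ in two ways. On one hand, Lemma \ref{feo1}\,\ri\ and the abelian property give $[[H,JH],JX_\l]=\frac{|H|^2}{|A|^2}[JA,JX_\l]=\frac{|H|^2}{|A|^2}[A,X_\l]=\frac{|H|^2}{|A|^2}\,\l X_\l$. On the other hand, the Jacobi identity gives $[[H,JH],JX_\l]=-[[JH,JX_\l],H]-[[JX_\l,H],JH]$, and both terms vanish because the abelian property converts $[JH,JX_\l]$ into $[H,X_\l]$ and $[JX_\l,H]$ into $\pm[JH,X_\l]$, each of which is zero by Lemma \ref{feo1}\,\riii\ applied to $H$ and to $JH\in\h$. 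Hence $\l X_\l=0$, so $\gl=0$ for every $\l\neq-\tfrac12$, and the trace of $\ad_A$ then disposes of $-\tfrac12$ as above. Without this (or an equivalent) argument, your proof is incomplete precisely at its central point.
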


\begin{proof}

$\ri$ Let $\l\in S^*-\{-\frac12\}$ and take $H\in\h, H\neq0$ and $X_\l\in\gl$.
Lemma \ref{feo1} (i) implies that
\[ [[H,JH],JX_\l]=\frac{|H|^2}{|A|^2}[JA,JX_\l]=\frac{|H|^2}{|A|^2}\l X_\l,\]
whereas Lemma \ref{feo1} $\rii$ and the fact that $\h$ is $J$-invariant imply that 
\[[[H,JH],JX_\l]=-[[JH,JX_\l],H]-[[JX_\l,H],JH]=0.\]

Then $X_\l=0$ and therefore $S^*-\{-\frac12\}=\emptyset$. If
$-\frac{1}{2}\in S^*$ then it is the only eigenvalue in $S^*$,
hence $\g$ is not unimodular, that is a contradiction. As a consequence,
$S^*=\emptyset$, that is, $S=\{0\}$, or equivalently, $A\in \z(\g)$. It follows from Lemma
\ref{prop} that $JA\in\z(\g)$ too.

\

$\rii$ It follows from Lemma \ref{int} that $\g'\cap J\g'=\{0\}$.
Therefore \[\g=\g'\oplus J\g'\oplus\h\] where $\g'=\go'$. Moreover,
this decomposition is orthogonal, because of Lemma \ref{Jg0c}.
\end{proof}

\medskip 

\begin{nota}
If $(\g,J,\pint)$ is Vaisman with $J$ abelian, it is much easier to
show that $A, JA\in\z(\g)$. Indeed, from Lemma \ref{ad_A-sim} and
Lemma \ref{ad_A-antisim} we have that $\ad_A$ is symmetric and
skew-symmetric, so $A\in\z(\g)$. Then $J$ abelian implies that
$JA\in\z(\g)$ too.
\end{nota}

\
\begin{proof}[Proof of Theorem \ref{main-theorem}]

Recall that we have the following orthogonal decompositions of $\g$,
\[\g=\text{span}\{A\}\oplus\go=\g'\oplus J\g'\oplus \h, \] 
with $JA\in \g'$, where $\g'$ and $J\g'$ are abelian subalgebras.
Since $\g_0'=\g'$, it follows from Lemma \ref{feo1} that $[\h,\g']=[\h,J\g']=0$ and, moreover, the
Lie bracket on $\h$ is also determined. In order to characterize completely the Lie bracket on
$\g$, we only need to consider $[\g',J\g']$, that is, the brackets $[X,JY]$ for $X,Y\in\g'$. Since
$A,JA\in\z(\g)$ we may assume that $X,Y$ are orthogonal to $JA$.

\begin{prop}
Using the previous notation, we have that:
\begin{itemize}
 \item [\ri] $[X,JY]=0$ for $\la X,Y\ra=0$,
 \item [\rii]$[X,JX]=\frac{|X|^2}{|A|^2}JA$.
\end{itemize}
\end{prop}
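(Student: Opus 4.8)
The plan is to pin down the bracket $[X,JY]$ for $X,Y\in\g'$ by combining the l.c.K.\ identity \eqref{g-lck-0} on a handful of triples with unimodularity. By the paragraph preceding the statement we may assume $X,Y$ are orthogonal to $JA$, and they are automatically orthogonal to $A$ since $A\perp\g'$; set $V:=\g'\cap(JA)^\perp$, so $\g'=\r JA\oplus^\perp V$ and $X,Y\in V$. The goal is to show $[X,JY]=\frac{\langle X,Y\rangle}{|A|^2}JA$, which contains both \ri\ (take $\langle X,Y\rangle=0$) and \rii\ (take $Y=X$).

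First I would compute $d\w(A,X,JY)=\o\wedge\w(A,X,JY)$: using that $A,JA\in\z(\g)$ (Proposition \ref{A-central}), that $\o(A)=1$ and $\o(X)=\o(JY)=0$, and $\w(X,JY)=\langle X,Y\rangle$, this collapses to $\langle [X,JY],JA\rangle=\langle X,Y\rangle$. Hence $[X,JY]=\frac{\langle X,Y\rangle}{|A|^2}JA+X\bullet Y$, where $X\bullet Y$ denotes the $V$-component of $[X,JY]$, and it remains to prove that the bilinear map $\bullet\colon V\times V\to V$ vanishes. Next I would record two structural facts about $\bullet$. Evaluating $d\w=\o\wedge\w$ on $(X,JY,JZ)$ with $X,Y,Z\in V$: the right-hand side vanishes (each of $\o(X),\o(JY),\o(JZ)$ is zero), the bracket $[JY,JZ]=[Y,Z]=0$ as $\g'$ is abelian, and what survives is $\langle[X,JY],Z\rangle=\langle[X,JZ],Y\rangle$. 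Together with $[X,JY]=[Y,JX]$ (abelianness of $J$), this says that the trilinear form $(X,Y,Z)\mapsto\langle X\bullet Y,Z\rangle$ on $V$ is fully symmetric; equivalently $\bullet$ is commutative and each left multiplication $L_X\colon Z\mapsto X\bullet Z$ is a symmetric operator of $V$. Secondly, the Jacobi identity together with $\g'$ being abelian shows that $(\g',\cdot)$ with $X\cdot Y:=[X,JY]$ is a commutative associative algebra (up to sign this is the product of \cite[Corollary 3.3]{ABD1}); since $\r JA$ is a two-sided ideal of it annihilating everything, the induced product $\bullet$ on $V\cong\g'/\r JA$ is again associative, so $L_X^2=L_{X\bullet X}$.

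Now I would bring in unimodularity. For $X\in V$, compute $\tr(\ad_{JX})$ in the orthonormal basis of $\g$ obtained by concatenating orthonormal bases of the mutually orthogonal subspaces $\r A$, $\r JA$, $V$, $JV$, $\h$ (their direct sum is $\g$ because $JA\in\g'$ by Lemma \ref{JA}, hence $A\in J\g'$, and $\g=\g'\oplus J\g'\oplus\h$ is orthogonal by Proposition \ref{A-central}). Because $A,JA$ are central, $\g'$ and $J\g'$ are abelian, and $[\h,J\g']=0$ (Lemma \ref{feo1}), the operator $\ad_{JX}$ annihilates $\r A\oplus\r JA\oplus JV\oplus\h$ and sends $V$ into $\g'=\r JA\oplus V$; so only the $V$-block contributes to the trace and $\tr(\ad_{JX})=-\tr(L_X)$. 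Since $\g$ is unimodular we conclude $\tr(L_X)=0$ for every $X\in V$. Feeding the vector $X\bullet X\in V$ into this and using $L_X^2=L_{X\bullet X}$ gives $\tr(L_X^2)=0$; as $L_X$ is symmetric this forces all of its (real) eigenvalues to vanish, hence $L_X=0$. Therefore $X\bullet Y=0$ for all $X,Y\in V$, i.e.\ $[X,JY]=\frac{\langle X,Y\rangle}{|A|^2}JA$, which yields \ri\ and \rii.

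Most of this is mechanical: the sign-chasing in the $d\w$ computations, and verifying precisely which subspaces $\ad_{JX}$ kills. The one step that needs an idea is reinjecting unimodularity at the end; the clean formulation is $0=\tr(L_X^2)=\sum_i\lambda_i^2$ for the symmetric operator $L_X$, which forces $L_X=0$. Equivalently, one can observe that $(V,\bullet)$ is a commutative associative algebra which unimodularity forces to be nilpotent — e.g.\ via Lemma \ref{affA} applied to its $\aff$ — and a nilpotent algebra whose multiplication is self-adjoint with respect to an inner product must be trivial.
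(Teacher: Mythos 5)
Your proof is correct, and it reaches the same endgame as the paper --- unimodularity applied to the trace of an operator whose diagonal is forced to be a sum of squares --- but it gets there by a genuinely different route. The paper evaluates $d\w=\o\wedge\w$ on the single triple $(J[X,JY],X,JY)$ to obtain $|[X,JY]|^2-\la\ad_{J[JX,X]}Y,Y\ra=\frac{\la X,Y\ra^2}{|A|^2}$, deduces (using Cauchy--Schwarz for the diagonal term $Y=X$) that every diagonal entry of $\ad_{J[JX,X]}$ is nonnegative, and then kills them all at once with $\tr(\ad_{J[JX,X]})=0$. You instead evaluate on $(A,X,JY)$ and on $(X,JY,JZ)$ to isolate the $JA$-component and to prove full symmetry of the trilinear form $\la[X,JY],Z\ra$, import associativity of $X\cdot Y=[X,JY]$ from the Jacobi identity, and let unimodularity enter twice: first as $\tr(L_X)=0$, then through $0=\tr(L_{X\bullet X})=\tr(L_X^2)$ with $L_X$ symmetric. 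The two arguments are secretly computing the trace of the same operator: since $\ad_{J[JX,X]}$ acts on $\g'$ as $L_{X\bullet X}=L_X^2$, the paper's identity $\la\ad_{J[JX,X]}Y,Y\ra=|[X,JY]|^2$ for $Y\perp X$ is exactly your statement $\la L_X^2Y,Y\ra=|L_XY|^2$, i.e.\ symmetry plus associativity packaged into one $d\w$ computation. What your version buys is conceptual clarity: it makes the commutative associative algebra $(V,\bullet)$ explicit and reduces the proposition to the elementary fact that a symmetric operator with $\tr(L_X^2)=0$ vanishes; the paper's version is more computational but self-contained, needing only one evaluation of $d\w$ and Cauchy--Schwarz. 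All the auxiliary facts you invoke ($A,JA\in\z(\g)$, $[\h,\g']=[\h,J\g']=0$, the orthogonal decomposition $\g=\r A\oplus\r JA\oplus V\oplus JV\oplus\h$) are indeed available at this point in the paper, so there is no gap.
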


\begin{proof}
We calculate
\[ d\w(J[X,JY],X,JY)=-\la[J[X,JY],X],Y\ra -\la[X,JY],[X,JY]\ra +
\la[JY,J[X,JY]],JX\ra.\]
Since $\g'$ and $J\g'$ are orthogonal, we get $\la[JY,J[X,JY]],JX\ra=0$. From Jacobi identity and
the fact that $\g'$ is abelian we have that
\[[J[X,JY],X]=-[[X,JY],JX]=[[JY,JX],X]+[[JX,X],JY]=[[JX,X],JY].\]
Therefore $d\w(J[X,JY],X,JY)= -|[X,JY]|^2+\la\ad_{J[JX,X]}Y,Y\ra$. On the other hand,
\[\o\wedge\w(J[X,JY],X,JY)=\frac{\la A,J[X,JY]\ra}{|A|^2}\la X,Y\ra - \frac{\la
A,X\ra}{|A|^2}\la Y,J[X,JY]\ra - \frac{\la
A,JY\ra}{|A|^2}\la[X,JY],X\ra.\]
Since $\g'$ and $J\g'$ are orthogonal and the fact that $\la Y,JA\ra=0$ we get
$\o\wedge\w(J[X,JY],X,JY)=\frac{\la
A,J[X,JY]\ra}{|A|^2}\la X,Y\ra$. It follows from \eqref{eq1} that
$\o\wedge\w(J[X,JY],X,JY)=-\frac{\la X,Y\ra^2}{|A|^2}$.
Therefore
\[|[X,JY]|^2-\la\ad_{J[JX,X]}Y,Y\ra=\frac{\la X,Y\ra^2}{|A|^2}.\]

If $\la Y,X\ra=0$, we have
\begin{equation}\label{N1}
\la\ad_{J[JX,X]}Y,Y\ra=|[X,JY]|^2.
\end{equation}

If $Y=X$, we get
\begin{equation}\label{N2}
\la\ad_{J[JX,X]}X,X\ra=|[X,JX]|^2-\frac{|X|^4}{|A|^2}.
\end{equation}

Our aim is to calculate $\tr(\ad_{J[JX,X]})$ for a fixed $X\in\g'$ and $\la
X,JA\ra=0$.

It follows from \eqref{eq1} that $\la[X,JX],JA\ra=\la JX,JX\ra=|X|^2.$ Then
using the Cauchy-Schwarz inequality we obtain
\begin{equation}\label{N3}
|X|^4=|\la[X,JX],JA\ra|^2\leq |[X,JX]|^2|A|^2=\la\ad_{J[JX,X]}X,X\ra|A|^2+|X|^4.
\end{equation}

From \eqref{N1} and \eqref{N3} we have that $\la\ad_{J[JX,X]}X,X\ra\geq0$ and
$\la\ad_{J[JX,X]}Y,Y\ra\geq0$ for all $Y\in\g'$,
$\la Y,X\ra=0$ and $\la Y,JA\ra=0$.

We know also that $\ad_{J[JX,X]}JA=0$, $\ad_{J[JX,X]}JZ=0$ for all $Z\in\g'$ and
$\ad_{J[JX,X]}H=0$ for all $H\in\h$.
Therefore
\begin{equation}\label{tra}
\tr(\ad_{J[JX,X]})=\la\ad_{J[JX,X]}X,X\ra+\sum_{\la
Y,X\ra=0}\la\ad_{J[JX,X]}Y,Y\ra.
\end{equation}

Since $\g$ is unimodular, it must be $\la\ad_{J[JX,X]}Y,Y\ra=0$, and from
\eqref{N1} we get $[X,JY]=0$ for all $Y\in\g', \la
Y,X\ra=0$. This proves $\ri$.

\

$\rii$ From \eqref{tra} we have that $\la\ad_{J[JX,X]}X,X\ra=0$. Then it follows
from \eqref{N3} that $[X,JX]=c(X)JA$ with
$c(X)\in\r$. From \eqref{eq1} we get $\la[X,JX],JA\ra=|X|^2$, and this implies
that $c(X)=\frac{|X|^2}{|A|^2}.$ Therefore
$[X,JX]=\frac{|X|^2}{|A|^2}JA$ for all $X\in \g'$.
\end{proof}

\smallskip

As a consequence, the only non-vanishing brackets on $\g$ are $[X,JX]=\frac{|X|^2}{|A|^2}JA$ for
$X\in\g$ with $\la X,A\ra=0$ and $\la X,JA\ra=0$, so that $\g'=\text{span}\{JA\}$ and
$J\g'=\text{span}\{A\}$. Considering an orthonormal basis $\{X_1,\ldots,X_n,Y_1,\ldots,Y_n\}$ of
$\h$, with $JX_i=Y_i$, we have that the only non-vanishing brackets on $\g$ are
$[X_i,Y_i]=\frac{JA}{|A|^2}$. Setting $Z_1=\frac{JA}{|A|^2},\, Z_2=\frac{A}{|A|^2}$, it is clear that
$\g$ is isomorphic to $\r\times\mathfrak{h}_{2n+1}$ and $(J,\pint)$ is equivalent to
$(J_0,\pint_\l)$ for $\lambda=|A|$.
\end{proof}

\

\begin{nota}
On $\r\times {\mathfrak h}_{2n+1}$ there are $\left[\frac{n}{2}\right]+1$
equivalence classes of complex structures, all of them
abelian (see \cite[Proposition 2.2]{ABD}). It follows from the proof of Theorem
\ref{main-theorem} that if $(J,\pint)$ is an l.c.K. structure on this Lie algebra, then $J$
is equivalent to the complex structure $J_0$, so that representatives of only one equivalence class
of complex structures may admit l.c.K. metrics (compare \cite{U}).
\end{nota}

\medskip

In terms of solvmanifolds, we can rewrite Theorem \ref{main-theorem} as follows.

\begin{cor}
Let $\Gamma\backslash G$ be a compact solvmanifold with an l.c.K. structure induced from a left
invariant l.c.K. structure on $G$ with an abelian complex structure, and $G$ simply connected.
Then $G$ is isomorphic to $\r \times H_{2n+1}$, and $\r \times H_{2n+1}$ has a left-invariant l.c.K.
structure induced by $(J_0, \pint_\l)$ for some $\lambda>0$. In particular, $\Gamma\backslash G$ is
a nilmanifold and the l.c.K. structure is Vaisman.
\end{cor}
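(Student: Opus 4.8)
The plan is to read the corollary off Theorem~\ref{main-theorem} by supplying the missing hypothesis (unimodularity) and then unwinding the conclusion, first at the level of Lie groups and then at the level of the compact quotient.

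First I would observe that the datum of a left-invariant l.c.K. structure on $G$ with an abelian complex structure that induces the given structure on $\Gamma\backslash G$ is the same, via the Proposition of Section~3 showing that the Lee form of a left-invariant l.c.K. metric is left-invariant, as an l.c.K. structure $(J,\pint)$ on the Lie algebra $\g$ of $G$ with $J$ abelian. Next I would verify that $\g$ is unimodular: since $\Gamma\backslash G$ is compact, $\Gamma$ is a lattice in the solvable group $G$, and a solvable Lie group admitting a lattice is unimodular (\cite{Mi}), i.e.\ $\tr(\ad_X)=0$ for every $X\in\g$. With unimodularity in hand, Theorem~\ref{main-theorem} applies and gives $\g\simeq\r\times\h_{2n+1}$ together with the fact that $(J,\pint)$ is equivalent to $(J_0,\pint_\l)$ for some $\l>0$. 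Because $G$ is simply connected, it is determined up to isomorphism by $\g$, so $G\simeq\r\times H_{2n+1}$; and the left-invariant l.c.K. structure it carries is, up to equivalence, the one induced by $(J_0,\pint_\l)$, which was checked to be left-invariant and l.c.K. in the paragraph preceding the theorem.

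Finally I would derive the two ``in particular'' assertions. The Lie algebra $\r\times\h_{2n+1}$ is nilpotent, since its commutator ideal is one-dimensional; hence $G=\r\times H_{2n+1}$ is a nilpotent Lie group and $\Gamma\backslash G$ is, by definition, a nilmanifold. Moreover $(J_0,\pint_\l)$ is Vaisman: as recorded in Example~\ref{heisenberg} and in the discussion before Theorem~\ref{main-theorem}, its Lee form is parallel with respect to the Levi-Civita connection, and since the projection $G\to\Gamma\backslash G$ is a local isometry for the invariant metric, the induced Lee form on the quotient is parallel as well, so the l.c.K. structure on $\Gamma\backslash G$ is Vaisman. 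I do not expect any genuine obstacle here: all the substance is in Theorem~\ref{main-theorem}, and the only auxiliary ingredient is the classical fact that the existence of a lattice forces unimodularity.
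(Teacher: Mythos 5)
Your argument is correct and is essentially the proof the paper intends (the paper states the corollary without proof): compactness of $\Gamma\backslash G$ forces $G$ to be unimodular, Theorem \ref{main-theorem} then identifies $\g$ with $\r\times\h_{2n+1}$ and $(J,\pint)$ with $(J_0,\pint_\l)$, and the Vaisman property descends since the projection is a local isometry. Only a cosmetic slip: a one-dimensional commutator ideal does not by itself imply nilpotency (cf.\ $\aff(\r)$); the correct reason is that here $\g'=\text{span}\{JA\}$ is central, so $\g$ is two-step nilpotent.
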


\

\end{document}